\newcommand{\zed}{\ensuremath{\mathbb Z}}
\newcommand{\eff}{\ensuremath{\mathbb F}}
\newtheorem{Theorem}{Theorem}[section]
\newtheorem{Example}{Example}[section]
\newtheorem{Remark}{Remark}[section]
\newtheorem{Lemma}[Theorem]{Lemma}
\newtheorem{Corollary}[Theorem]{Corollary}
\title {\textbf{All or Nothing at All}\footnote{We have ``borrowed'' the title of this paper from the
classic song of the same name written by Altman and Lawrence in 1939.
It was recorded by Frank Sinatra and the Harry James Orchestra
in 1939, and became a huge hit in 1943.}}
\author[1]{Paolo D'Arco}
\author[2]{Navid Nasr Esfahani}
\author[2]{Douglas~R.~Stinson\thanks{D.~Stinson's research is supported by NSERC discovery grant 203114-11.}}
\affil[1]{Dipartimento di Informatica, Universit\`{a} degli Studi di Salerno, 
84084 Fisciano (SA), Italy}
\affil[2]{David R.\ Cheriton School of Computer Science, University of Waterloo,
Waterloo, Ontario N2L 3G1, Canada}
\date{\today}
\begin{document}
\maketitle

\begin{abstract}
We continue a study of unconditionally secure all-or-nothing transforms (AONT)
begun in \cite{St}. An AONT is a bijective mapping that constructs
$s$ outputs from $s$ inputs. We consider the security of $t$ inputs,
when $s-t$ outputs are known. Previous work concerned the case $t=1$;
here we consider the problem for general $t$, focussing on the case $t=2$.
We investigate constructions of binary matrices for which the desired 
properties hold with the maximum probability. Upper bounds on
these probabilities are obtained
via a quadratic programming approach, while lower bounds can be obtained
from combinatorial constructions based on symmetric BIBDs and cyclotomy.
We also report some results on  exhaustive searches and random constructions
for small values of $s$.
\end{abstract}

\section{Introduction}
\label{intro.sec}

Rivest defined all-or-nothing transforms in \cite{R} in the setting of computational
security. Stinson considered unconditionally secure all-or-nothing transforms in \cite{St}. Here we extend
some of the results in \cite{St} by considering more general types of
unconditionally secure all-or-nothing transforms.

Let $X$ be a finite set, called an {\em alphabet}.
Let $s$ be a positive integer, and suppose that $\phi : X^s \rightarrow X^s$.
We will think of $\phi$ as a function that maps
an input $s$-tuple, say ${\mathbf x} = (x_1, \dots , x_s)$, to an
output $s$-tuple, say ${\mathbf y} = (y_1, \dots , y_s)$, where
$x_i,y_i \in X$ for $1 \leq i \leq s$.
Informally, the function $\phi$ is an unconditionally secure 
{\em all-or-nothing transform} provided that 
the following properties are satisfied:
\begin{enumerate}
\item $\phi$ is a bijection.
\item If any $s-1$ of the $s$ output values
$y_1, \dots , y_s$ are
fixed, then the value of any one input value $x_i$ ($1 \leq i \leq s$) is
completely undetermined, in an information-theoretic sense.
\end{enumerate}
We will denote such a function as
an $(s,v)$-AONT, where $v = |X|$.

The above definition can be rephrased in terms of the entropy function,
$\mathsf{H}$, as follows.  Let 
$X_1, \dots , X_s, Y_1, \dots , Y_s$ be random variables taking on
values in the finite set $X$.  (The variables $X_1, \dots , X_s$ need
not be independent, or uniformly distributed.)
Then these $2s$ random variables define an AONT provided that
the following conditions are satisfied:
\begin{enumerate}
\item $\mathsf{H}( Y_1, \dots , Y_s \mid X_1, \dots , X_s) = 0$.
\item $\mathsf{H}( X_1, \dots , X_s \mid Y_1, \dots , Y_s) = 0$.
\item $\mathsf{H}( X_i  \mid Y_1, \dots , Y_{j-1} , Y_{j+1}, \dots , Y_s) = \mathsf{H}( X_i)$
for all $i$ and $j$ such that $1 \leq i \leq s$ and $1 \leq j \leq s$ .
\end{enumerate}

Rivest \cite{R} defined AONT  to provide a mode of operation for  block ciphers 
that would require the decryption of all blocks
of an encrypted message in order to determine
any specific single block of plaintext. He called it the ``package transform''.
The method is very simple and elegant. Suppose
we are given $s$ blocks of plaintext, $(x_1, \dots , x_s)$. First, we apply an AONT, computing
$(y_1, \dots , y_s) = \phi (x_1, \dots , x_s)$. Then we encrypt 
$(y_1, \dots , y_s)$ using a block cipher. At the receiver's end, the ciphertext is decrypted,
and then the inverse transform $\phi^{-1}$ is applied to restore the $s$ plaintext blocks.
Note that the transform $\phi$ is not secret. Extensions of this technique are studied in 
\cite{CT,Des}.

All-or-nothing transforms have turned out to have numerous applications in 
cryptography and security. Here are some representative examples:
\begin{itemize}
\item exposure-resilient functions \cite{CDHKS}
\item network coding \cite{CCGCB,GLLY}
\item secure data transfer \cite{VAS}
\item anti-jamming techniques \cite{PL}
\item secure distributed cloud storage \cite{LWXH,SPK}
\item query anonymization for location-based services \cite{ZL}.
\end{itemize}

We note that the above definition of an unconditionally secure AONT does not say anything
regarding partial information that might be revealed about more than one of the $s$ input values.
For example, it does not rule out the possibility of determining
the exclusive-or of two input values, given some relatively small number of output values.
This motivates the following more general definition.
Let $1 \leq t \leq s$. We will say that 
$\phi$ is a 
{\em $t$-all-or-nothing transform} provided that 
the following properties are satisfied:
\begin{enumerate}
\item $\phi$ is a bijection.
\item If any $s-t$ of the $s$ output values
$y_1, \dots , y_s$ are
fixed, then  any $t$ of the input values $x_i$ ($1 \leq i \leq s$) are
completely undetermined, in an information-theoretic sense.
\end{enumerate}
We will denote such a function as
a $(t,s,v)$-AONT, where $v = |X|$.
Note that the original definition corresponds to a $1$-all-or-nothing transform.

This definition can also be rephrased in terms of the entropy function.  
As before, let 
$X_1, \dots , X_s, Y_1, \dots , Y_s$ be random variables taking on
values in the finite set $X$.  
These $2s$ random variables 
define a $t$-AONT provided that
the following conditions are satisfied:
\begin{enumerate}
\item $\mathsf{H}( Y_1, \dots , Y_s \mid X_1, \dots , X_s) = 0$.
\item $\mathsf{H}( X_1, \dots , X_s \mid Y_1, \dots , Y_s) = 0$.
\item For all $\mathcal{X} \subseteq \{X_1, \dots , X_s\}$ with 
$|\mathcal{X}|  = t$, and for all $\mathcal{Y} \subseteq \{Y_1, \dots , Y_s\}$ with 
$|\mathcal{Y}|  = t$, it holds that 
 \begin{equation}
 \label{t-AONT.eq}
 \mathsf{H}( \mathcal{X}  \mid \{ Y_1, \dots , Y_s\}  \setminus \mathcal{Y} ) = \mathsf{H}(\mathcal{X}).
 \end{equation}
\end{enumerate}

\subsection{Organization of the Paper}

The rest of this paper is organized as follows. In Section
\ref{linear.sec}, we give our basic result that characterizes
linear AONT in terms of matrices having invertible submatrices.
We also give a construction using Cauchy matrices  over a finite field
$\eff_q$, which is applicable provided
that $q \geq 2s$. It turns out that it is impossible to 
construct linear
AONT over $\eff_2$, so an interesting question is how ``close''
one can get to an AONT in this setting.
In Section \ref{2.sec}, we give some preliminary results and analyze one
infinite class of matrices. In Section \ref{nonexist}, we derive an upper bound 
on the maximum number of invertible 2 by 2 submatrices of an invertible $s$ by $s$ $0-1$ matrix
(this is relevant for the study of $2$-AONT). We use a method based on 
quadratic programming to prove our bound. In Section \ref{const.sec},
we discuss five construction methods for invertible $s$ by $s$ $0-1$ matrices
containing a large number of invertible 2 by 2 submatrices. The five methods are
\begin{enumerate}
\item exhaustive search,
\item a random construction,
\item a recursive construction,
\item a construction using symmetric balanced incomplete block designs (SBIBDs), and
\item a construction based on cyclotomy.
\end{enumerate}
We achieve our best asymptotic results from SBIBDs, where we have an infinite class of 
examples that are close to the upper bound derived in the previous section.
In Section \ref{general.sec}, we turn to arbitrary (i.e., linear or nonlinear)
AONT, and describe some connections with orthogonal arrays.
Finally, Section \ref{summary.sec} is a brief summary.


\section{Linear AONT}
\label{linear.sec}

We are mainly going to consider linear transforms.
Let $\eff_q$ be a finite field of order $q$.
An AONT with alphabet $\eff_q$ is {\em linear}
if each $y_i$ is an $\eff_q$-linear function of 
$x_1, \dots , x_s$. Then, we can write 
${\mathbf y} = \phi({\mathbf x}) = {\mathbf x}M^{-1}$
and  ${\mathbf x} = \phi^{-1}({\mathbf y}) = {\mathbf y}M$,
where $M$ is an invertible $s$ by $s$ matrix with entries from $\eff_q$. 

We will also be interested in functions that satisfy the condition (\ref{t-AONT.eq}) for
certain (but not necessarily all) pairs $\mathcal{X}, \mathcal{Y}$. This will be particularly
relevant in the  case where $\phi$ is a binary linear transformation. 
More specifically, suppose $q = 2^r$ for some $r\geq 1$ and $M$ is defined over the subfield
$\eff_2$ (so $M$ is a $0-1$ matrix). This could be desirable from an efficiency
point of view, because the only operations required to compute 
the transform are exclusive-ors of bitstrings. 
However, it turns out that there are no nontrivial 1-AONT
(a fact that was already observed in \cite{St}). So it is a reasonable and interesting
problem to study how close we can get to an AONT in this setting.
We will give a precise answer to this question for $t=1$ in Theorem \ref{R1.thm};
much of the rest of this paper will study the corresponding problem when $t=2$.

For $I,J \subseteq \{1, \dots , s\}$, define $M(I,J)$ to be the $|I|$ by $|J|$ 
submatrix of $M$ induced by the {\it columns} in $I$ and the {\it rows} in $J$.
The following lemma characterizes 
linear all-or-nothing transforms in terms of properties of the matrix $M$.
This lemma can be considered to be a generalization of \cite[Theorem 2.1]{St}.

\begin{Lemma}
\label{linear}
Suppose that $q$ is a prime power and $M$ is 
an invertible $s$ by $s$ matrix with entries from $\eff_q$. 
Let $\mathcal{X} \subseteq \{X_1, \dots , X_s\}$,  
$|\mathcal{X}|  = t$, and let $\mathcal{Y} \subseteq \{Y_1, \dots , Y_s\}$,  
$|\mathcal{Y}|  = t$.
Then the function $\phi({\mathbf x}) = {\mathbf x}M^{-1}$
satisfies (\ref{t-AONT.eq}) with respect to $\mathcal{X}$ and $\mathcal{Y}$ if and only if 
the submatrix $M(I,J)$ is invertible, where
$I = \{i: X_i \in \mathcal{X}\}$ and $J = \{j: Y_j \in \mathcal{Y}\}$.
\end{Lemma}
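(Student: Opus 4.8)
The plan is to translate the information-theoretic condition (\ref{t-AONT.eq}) into a statement about the affine map that expresses the ``hidden'' inputs $\mathcal{X}$ in terms of the ``hidden'' outputs $\mathcal{Y}$, and then to read off invertibility of $M(I,J)$ from that map. Throughout I take the inputs, and hence (since $\phi$ is a bijection) the outputs, to be uniformly distributed over $\eff_q^s$; this is the convention under which a linear $\phi$ is evaluated, and it is precisely what reduces the security condition to the distribution-free criterion of matrix invertibility. (For a degenerate input distribution one can have $M(I,J)$ invertible while the observed outputs still determine $\mathcal{X}$, so some such convention is genuinely needed for an ``if and only if''.) Under the uniform convention $\mathsf{H}(\mathcal{X}) = \log(q^t)$, since any $t$ coordinates of a uniform vector in $\eff_q^s$ are themselves uniform on $\eff_q^t$.

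First I would set up coordinates. Writing $\mathbf{x} = \mathbf{y}M$ and splitting the sum $x_i = \sum_{j} y_j M_{j,i}$ according to whether $j \in J$ or $j \notin J$, I obtain, for the subvector indexed by $I$, the relation
\begin{equation*}
\mathbf{x}_I = \mathbf{y}_J \, M(I,J) \, + \, \mathbf{y}_{\bar J} \, M(I,\bar J),
\end{equation*}
where $\bar J = \{1,\dots,s\} \setminus J$ and $\mathbf{y}_{\bar J}$ lists exactly the conditioning outputs $\{Y_1,\dots,Y_s\}\setminus\mathcal{Y}$. The point is that conditioning on $\mathbf{y}_{\bar J}$ freezes the second term to a constant, so the conditional behaviour of $\mathcal{X}$ is governed entirely by the linear map $\mathbf{y}_J \mapsto \mathbf{y}_J\,M(I,J)$ on $\eff_q^t$. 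I would be careful here to check that the row/column indexing in this relation really yields $M(I,J)$ (columns in $I$, rows in $J$), and not some other submatrix.

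For the direction ``$M(I,J)$ invertible $\Rightarrow$ (\ref{t-AONT.eq})'', I would fix an arbitrary value $\mathbf{c}$ of $\mathbf{y}_{\bar J}$. Because the outputs are jointly uniform, conditioned on $\mathbf{y}_{\bar J}=\mathbf{c}$ the remaining outputs $\mathbf{y}_J$ are uniform on $\eff_q^t$; since $M(I,J)$ is invertible, the affine bijection $\mathbf{y}_J \mapsto \mathbf{y}_J\,M(I,J)+\mathbf{c}\,M(I,\bar J)$ carries this uniform distribution onto the uniform distribution of $\mathbf{x}_I$ on $\eff_q^t$. As this holds for every $\mathbf{c}$, the conditional distribution of $\mathcal{X}$ equals its unconditional one, giving $\mathsf{H}(\mathcal{X}\mid \{Y_1,\dots,Y_s\}\setminus\mathcal{Y}) = \log(q^t) = \mathsf{H}(\mathcal{X})$. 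For the converse I would argue by contrapositive: if $M(I,J)$ has rank $r<t$, then for each fixed $\mathbf{c}$ the vector $\mathbf{x}_I$ is confined to a single coset of the $r$-dimensional image of $M(I,J)$, a set of size $q^r$, so $\mathsf{H}(\mathcal{X}\mid \mathbf{y}_{\bar J}=\mathbf{c}) \le \log(q^r)$; averaging over $\mathbf{c}$ yields $\mathsf{H}(\mathcal{X}\mid \{Y_1,\dots,Y_s\}\setminus\mathcal{Y}) \le \log(q^r) < \log(q^t) = \mathsf{H}(\mathcal{X})$, so (\ref{t-AONT.eq}) fails.

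I expect no serious obstacle: the argument is a routine combination of linear algebra over $\eff_q$ with the behaviour of the uniform distribution under affine maps. The only points that need care are the bookkeeping that singles out $M(I,J)$ as the submatrix controlling $\mathcal{X}$, and the explicit statement that the equivalence is taken relative to the uniform input distribution.
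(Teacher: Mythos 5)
Your proposal is correct and takes essentially the same route as the paper's own proof: the same decomposition $\mathbf{x}_I = \mathbf{y}_J\,M(I,J) + \mathbf{y}_{\bar J}\,M(I,\bar J)$ under conditioning on the outputs outside $\mathcal{Y}$, with invertibility of $M(I,J)$ yielding a bijection onto $(\eff_q)^t$ in one direction and the rank-$r$ coset confinement defeating the entropy equality in the other. Your additional care is a genuine (if minor) improvement rather than a different argument: you state the uniform-input convention that the paper's informal ``completely undetermined'' phrasing silently relies on (the stated iff can fail for degenerate or correlated input distributions), and you condition on the correct index set $j \notin J$, where the paper's proof contains a small slip in writing $j \in J$.
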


\begin{proof}
Let ${\mathbf x'} = (x_i : i \in I)$. We have ${\mathbf x'} = {\mathbf y} M(I,\{1,\dots , s\})$.
Now assume that $y_j$ is fixed for all $j \in J$. Then we can write
${\mathbf x'} = {\mathbf y'} M(I,J) + {\mathbf c}$, where ${\mathbf y'} = (y_j : j \in J)$
and ${\mathbf c}$ is a vector of constants.
If $M(I,J)$ is invertible, then ${\mathbf x'}$ is completely undetermined, in the sense that
${\mathbf x'}$ takes on all values in $(\eff_q)^t$ as ${\mathbf y'}$ 
varies over $(\eff_q)^t$. On the other hand, if $M(I,J)$ is not invertible, then
${\mathbf x'}$ can take on only $(\eff_q)^{t'}$ possible values, where $\mathsf{rank}(M(I,J)) = t' < t$.
\end{proof}

An $s$ by $s$ \emph{Cauchy matrix} can be defined over $\eff_q$ 
if $q\geq 2s$. Let $a_1, \dots , a_s,b_1, \dots , b_s$ be distinct elements of
$\eff_q$. Let $c_{ij} = 1/(a_i-b_j)$, for $1 \leq i \leq s$ and $1 \leq j \leq s$.
Then $C = (c_{ij})$ is the Cauchy matrix defined by the sequence  $a_1, \dots , a_s,b_1, \dots , b_s$.
The most important property of a Cauchy matrix $C$ is that any square submatrix of $C$ (including $C$
itself) is invertible over $\eff_q$.

Cauchy matrices were briefly mentioned in \cite{St} as a possible method of 
constructing AONT. However, they are particularly relevant in light of the stronger
definitions we are now investigating. To be specific, 
Cauchy matrices immediately yield the strongest possible all-or-nothing transforms, 
as stated in the following theorem.

\begin{Theorem}
Suppose $q$ is a prime power and $q \geq 2s$. Then there is a linear transform
that is simultaneously a $(t,s,q)$-AONT for all $t$ such that $1 \leq t \leq s$.
\end{Theorem}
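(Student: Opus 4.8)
The plan is to take $M = C$ to be a Cauchy matrix over $\eff_q$ and to observe that the single defining property of Cauchy matrices---that every square submatrix is invertible---delivers every requirement in one stroke. Since $q \geq 2s$, we can choose $2s$ distinct elements $a_1, \dots, a_s, b_1, \dots, b_s \in \eff_q$ and form $C = (c_{ij})$ with $c_{ij} = 1/(a_i - b_j)$, so such a matrix exists in the first place.

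First I would dispatch conditions 1 and 2 in the entropy definition of a $t$-AONT. Because $C$ is itself a square submatrix of $C$, it is invertible; hence $\phi(\mathbf{x}) = \mathbf{x}C^{-1}$ is a bijection with inverse $\phi^{-1}(\mathbf{y}) = \mathbf{y}C$. This immediately yields $\mathsf{H}( Y_1, \dots, Y_s \mid X_1, \dots, X_s) = 0$ and $\mathsf{H}( X_1, \dots, X_s \mid Y_1, \dots, Y_s) = 0$, and note that these two conditions do not depend on $t$.

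The substance of the argument is condition 3, and here I would simply invoke Lemma \ref{linear}. Fix any $t$ with $1 \leq t \leq s$, and fix any pair $\mathcal{X} \subseteq \{X_1, \dots, X_s\}$ and $\mathcal{Y} \subseteq \{Y_1, \dots, Y_s\}$ with $|\mathcal{X}| = |\mathcal{Y}| = t$. Writing $I = \{i : X_i \in \mathcal{X}\}$ and $J = \{j : Y_j \in \mathcal{Y}\}$, the relevant submatrix $C(I,J)$ is a $t$ by $t$ square submatrix of the Cauchy matrix, hence invertible. By Lemma \ref{linear}, $\phi$ then satisfies (\ref{t-AONT.eq}) with respect to this $\mathcal{X}$ and $\mathcal{Y}$. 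Since the choice of $t$ and of the pair $(\mathcal{X}, \mathcal{Y})$ was arbitrary, the single matrix $C$ realizes a $(t,s,q)$-AONT for every $t$ in the stated range simultaneously.

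There is no real obstacle here: the theorem is essentially a corollary of Lemma \ref{linear} combined with the submatrix property of Cauchy matrices, and the only point requiring any care is to notice that the bijectivity conditions come for free from the invertibility of $C$ itself---the $t = s$, $I = J = \{1, \dots, s\}$ instance of the submatrix property---rather than needing a separate argument.
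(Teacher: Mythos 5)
Your proof is correct and follows the same route the paper intends: the paper gives no separate argument for this theorem precisely because it is, as you say, an immediate consequence of Lemma \ref{linear} together with the fact that every square submatrix of a Cauchy matrix (including the matrix itself, which gives bijectivity) is invertible over $\eff_q$. Your added care in noting that the bijectivity conditions are the $I=J=\{1,\dots,s\}$ instance of the submatrix property is a nice touch, but substantively you have reproduced the paper's argument.
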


\section{Linear Transforms over $\eff _2$}
\label{2.sec}


\begin{Remark}
In the remainder of the paper, when we discuss invertibility of a matrix, we mean
invertibility over $\eff_2$.
\end{Remark}

There is no Cauchy matrix over $\eff_2$ if $s > 1$. In fact, it is easy to see that
there is no linear $(1,s,2)$-AONT if $s > 1$. This is because every entry of $M$ must equal $1$
(in order that the $1$ by $1$ submatrices of $M$ are invertible). But then $M$ itself is not
invertible. This motivates trying to determine how close we can get to a $(1,s,2)$-AONT,
or more generally, to a $(t,s,2)$-AONT, for a given $t$, $1 \leq t \leq s$. 

For future reference, we record the  $2$ by $2$ invertible $0-1$ matrices.

\begin{Lemma}
\label{2x2.lem}
A $2$ by $2$ $0-1$ matrix is  invertible if and only if it is one of the following
six matrices:
\[ 
\begin{array}{cccccc}
\left(
\begin{array}{cc}
1 & 1 \\
1 & 0 
\end{array}
\right) &
\left(
\begin{array}{cc}
1 & 1 \\
0 & 1 
\end{array}
\right) &
\left(
\begin{array}{cc}
0 & 1 \\
1 & 1 
\end{array}
\right) &
\left(
\begin{array}{cc}
1 & 0 \\
1 & 1 
\end{array}
\right) &
\left(
\begin{array}{cc}
1 & 0 \\
0 & 1
\end{array}
\right) &
\left(
\begin{array}{cc}
0 & 1 \\
1 & 0 
\end{array}
\right).
\end{array}
\]
\end{Lemma}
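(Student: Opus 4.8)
The plan is to prove this by exhaustive enumeration, which is entirely tractable since there are only $2^4 = 16$ distinct $2 \times 2$ matrices over $\eff_2$. The cleanest approach is to compute the determinant over $\eff_2$ for each matrix and identify exactly those with nonzero determinant, since a square matrix over a field is invertible if and only if its determinant is nonzero.

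First I would write a general $2 \times 2$ matrix as $\left(\begin{smallmatrix} a & b \\ c & d \end{smallmatrix}\right)$ with entries $a,b,c,d \in \{0,1\}$, and recall that its determinant over $\eff_2$ is $ad - bc = ad + bc \pmod 2$. The matrix is invertible precisely when $ad + bc \equiv 1 \pmod 2$, which happens exactly when exactly one of the two products $ad$ and $bc$ equals $1$. Since each product equals $1$ only when both its factors are $1$, this gives a concrete condition to check against the $16$ cases.

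Next I would organize the enumeration by which entries are nonzero. The determinant condition $ad + bc = 1$ rules out any matrix with $ad = bc$. A short case analysis shows the six admissible matrices fall into two families: the four matrices with exactly three $1$s (where $ad=1, bc=0$ or $ad=0, bc=1$), and the two permutation-type matrices, namely the identity (with $ad=1, bc=0$) and the swap matrix (with $ad=0, bc=1$). Listing these against the six displayed matrices confirms the correspondence, and one checks that the remaining ten matrices---the zero matrix, the four with a single $1$, the two with a full row or column duplicated, the all-ones matrix, and the two with a zero on the diagonal giving $ad=bc=0$ or $ad=bc=1$---all fail the condition.

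Since the statement is a finite verification rather than a structural argument, there is no genuine obstacle; the only thing to guard against is an off-by-one in the bookkeeping of the sixteen cases. To make the proof transparent and self-contained, I would present it as a determinant computation: invertibility over $\eff_2$ is equivalent to $ad+bc=1$, and a direct check of all binary $(a,b,c,d)$ shows this holds exactly for the six matrices listed. This keeps the argument brief while leaving no case unaccounted for.
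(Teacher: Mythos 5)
Your proposal is correct. The paper actually states Lemma \ref{2x2.lem} with no proof at all (it is recorded ``for future reference'' as a routine fact), so your argument---invertibility over $\eff_2$ is equivalent to $ad+bc=1$, i.e.\ exactly one of the products $ad$, $bc$ equals $1$, verified over all sixteen binary matrices---supplies precisely the elementary finite check the authors leave implicit, and it is the natural way to do so.

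One small bookkeeping point to tidy before relying on your prose: your taxonomy of the ten singular matrices is garbled, even though it totals ten. The four singular matrices with exactly two $1$s are exactly those whose two $1$s share a row or share a column; and the category you describe as ``a zero on the diagonal giving $ad=bc=1$'' is empty, since $ad=1$ forces $a=d=1$. None of this affects the validity of the proof, because the determinant criterion by itself decides each of the sixteen cases; it only means the verbal classification should be corrected or simply dropped in favor of the bare case check.
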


We first consider an  example.

\begin{Example}
\label{exam-3}
Define a $3$ by $3$ matrix:
\[M = 
\left(
\begin{array}{ccc}
1 & 1 & 1\\
1 & 0 & 1\\
1 & 1 & 0
\end{array}
\right) .
\]
It is clear that seven of the nine $1$ by $1$ submatrices of $M$ are invertible
(namely, the ``1'' entries).
There are nine $2$ by $2$ submatrices of $M$ and seven of them
are seen to be invertible, from Lemma \ref{2x2.lem}. The only non-invertible $2$ by $2$ submatrices 
are $M(\{1,3\},\{1,2\})$ and $M(\{1,2\},\{1,3\})$.
Finally, $M$ itself is invertible.
\end{Example}

It seems natural to quantify the ``closeness'' of $M$ to an all-or-nothing transform
by considering the ratio of invertible square submatrices 
to the total number of square submatrices (of a given size $t$).
Therefore, for an $s$ by $s$ invertible $0-1$ matrix $M$ and for  
$1 \leq t \leq s$, we  define
\[
N_{t}(M) = \text{number of invertible $t$ by $t$ submatrices of $M$}
\]
and
\[
R_{t}(M) = \frac{N_t(M)}{\binom{s}{t}^2}.
\]
We refer to $R_{t}(M)$ as the \emph{$t$-density} of the matrix $M$.
For $1 \leq t \leq s$, we also define 
\[ R_t(s) = \max \{  R_{t}(M) : \text{$M$ is an $s$ by $s$ invertible $0-1$ matrix}\} .\]
$R_t(s)$ denotes the maximum $t$-density of any $s$ by $s$ invertible $0-1$ matrix.

For the matrix $M$ from Example \ref{exam-3}, we have $R_{1}(M) = 7/9$
and $R_{2}(M) = 7/9$.

\begin{Lemma}
\label{invertible1}
Suppose $M$ is an $s$ by $s$ $0-1$ matrix having at most $s-2$ zero entries. 
Then $M$ is not invertible.
\end{Lemma}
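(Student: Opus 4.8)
The plan is to exploit the fact that a matrix with very few zeros is forced to have repeated all-ones rows, and that two equal rows make a matrix singular over any field, in particular over $\eff_2$. The key observation is a counting (pigeonhole) argument: since each zero entry of $M$ lies in exactly one row, the number of rows that contain at least one zero entry is at most the total number of zero entries, which by hypothesis is at most $s-2$.

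First I would count rows. The matrix $M$ has $s$ rows. Call a row a \emph{zero-row} if it contains at least one $0$ entry, and an \emph{all-ones row} otherwise. Each zero-row accounts for at least one of the zero entries, so the number of zero-rows is at most $s-2$. Consequently, at least $s-(s-2) = 2$ of the rows are all-ones rows.

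Next I would conclude. Any two all-ones rows are identical vectors in $(\eff_2)^s$, so $M$ has two equal rows. Two equal rows are linearly dependent, hence $\mathsf{rank}(M) < s$ and $M$ is not invertible. (Equivalently, their difference is the zero vector, exhibiting a nontrivial linear dependence among the rows.) By symmetry, the same argument applied to columns would work equally well.

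I do not anticipate a genuine obstacle here; the only point requiring a little care is the bookkeeping in the pigeonhole step, namely verifying that ``at most $s-2$ zero entries'' really does force at least two entirely zero-free rows rather than merely one. This is immediate once one observes that the zero entries are partitioned among the rows and that a row need only contain a single zero to be disqualified as an all-ones row.
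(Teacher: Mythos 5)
Your proof is correct and follows essentially the same argument as the paper's: the paper applies the identical pigeonhole reasoning to columns (at most $s-2$ zeros force two all-ones columns, which are linearly dependent), while you phrase it for rows, which as you note is equivalent by symmetry. No gaps; your write-up just spells out the counting step that the paper leaves implicit.
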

\begin{proof}
There must exist at least two columns of $M$ that do not contain a zero entry.
These two columns are identical, so they are linearly dependent.
\end{proof}

\begin{Lemma}
\label{invertible2}
Suppose $M = J_s - I_s$, where $I_s$ denotes the $s$ by $s$  identity matrix 
and $J_s$ denotes the $s$ by $s$ matrix in which every entry is equal to one.
Then $M$ is invertible over $\eff_2$ if and only if $s$ is even.
\end{Lemma}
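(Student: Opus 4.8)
The plan is to work directly over $\eff_2$, where $M = J_s - I_s = J_s + I_s$ since $-1 = 1$ in $\eff_2$, so that $M$ has zeros on the diagonal and ones everywhere else. Rather than computing a determinant, I would analyze the kernel of $M$, exploiting the fact that $J_s$ has rank one.

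First I would observe that for a column vector $v \in (\eff_2)^s$, we have $Mv = 0$ if and only if $J_s v = v$. Writing $\sigma = \sum_{i=1}^s v_i \in \eff_2$, the product $J_s v$ equals $\sigma \mathbf{1}$, where $\mathbf{1}$ denotes the all-ones vector. Hence any null vector $v$ must satisfy $v = \sigma \mathbf{1}$, so $v$ is forced to be either $\mathbf{0}$ or $\mathbf{1}$; this is exactly the collapse furnished by the rank-one structure of $J_s$.

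Next I would test the candidate $v = \mathbf{1}$ for consistency. If $v = \mathbf{1}$, then $\sigma = \sum_{i=1}^s 1 = s \bmod 2$, and the requirement $v = \sigma \mathbf{1}$ forces $\sigma = 1$, i.e., $s$ odd. Thus $\mathbf{1}$ lies in the kernel precisely when $s$ is odd, while $\mathbf{0}$ always does. Combining these, the kernel is nontrivial if and only if $s$ is odd, which is equivalent to saying that $M$ is invertible if and only if $s$ is even.

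The only subtle point is the converse direction, namely invertibility when $s$ is even: exhibiting a single null vector is not enough, and one must rule out all nonzero ones, which is exactly what the argument above accomplishes by restricting the possible kernel to the two constant vectors. An alternative route computes $\det(J_s - I_s)$ over $\zed$ from the eigenvalues of $J_s$ (the value $s$ once and $0$ with multiplicity $s-1$), giving $\det = (-1)^{s-1}(s-1)$, which is odd exactly when $s$ is even; reducing modulo $2$ then yields the claim. I expect the direct null-space argument to be the shorter and cleaner of the two.
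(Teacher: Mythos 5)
Your proof is correct, but it takes a genuinely different route from the paper's on the invertibility half. For $s$ odd, your observation that $\mathbf{1}$ lies in the kernel is exactly the paper's argument (the sum of all the columns of $M$ vanishes), merely rephrased in kernel language. For $s$ even, however, the paper simply exhibits the inverse: it checks that $M^{-1} = M$, i.e., $M^2 = I_s$ over $\eff_2$ (indeed $M^2 = (J_s+I_s)^2 = sJ_s + I_s$ in characteristic $2$, which equals $I_s$ when $s$ is even). You instead rule out every nonzero kernel vector by exploiting the rank-one structure of $J_s$: any $v$ with $Mv = 0$ must equal $\sigma\mathbf{1}$ where $\sigma = \sum_i v_i$, and the consistency check on $v = \mathbf{1}$ forces $s$ to be odd. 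Both arguments are complete. The paper's version buys an explicit inverse --- that $J_s - I_s$ is an involution is of independent computational interest, since this matrix is reused as a transform elsewhere in the paper --- whereas yours is a single unified argument covering both parities, requires no prior guess of what the inverse should be, and gives slightly more information in the singular case (the kernel is exactly $\{\mathbf{0},\mathbf{1}\}$, so the rank is $s-1$ when $s$ is odd). Your alternative via integer eigenvalues, $\det(J_s - I_s) = (-1)^{s-1}(s-1)$ over $\zed$ followed by reduction modulo $2$, is also sound and is closest in spirit to how the paper later handles SBIBD incidence matrices in Lemma \ref{SBIBDdet.thm}.
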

\begin{proof}
If $s$ is even, then it is easy to check that 
$M^{-1} = M$. If $s$ is odd, then observe that the sum of all the columns of
$M$ yields the zero-vector, so we have a dependence relation among the columns
of $M$.
\end{proof}

\begin{Lemma}
\label{invertible3}
Suppose $M$ is an $s$ by $s$ $0-1$ matrix having exactly $s-1$ zero entries. 
Then $M$ is invertible over $\eff_2$ if and only if the zero entries 
occur in $s-1$ different rows and in $s-1$ different columns.
\end{Lemma}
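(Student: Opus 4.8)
The plan is to prove both implications by reducing everything to a statement about how many rows (and columns) of $M$ consist entirely of ones. Since $M$ has exactly $s-1$ zero entries, I would first count: if $a$ denotes the number of all-ones rows, then the remaining $s-a$ rows must contain all $s-1$ zeros. The condition ``the zeros lie in $s-1$ different rows'' is then equivalent to $a=1$ (exactly one all-ones row, every other row having exactly one zero), and ``the zeros lie in $s-1$ different columns'' is equivalent to there being exactly one all-ones column. This reformulation is what drives both directions.

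For the \emph{only if} direction I would argue by contrapositive. If the zeros do not occupy $s-1$ distinct rows, then some row contains at least two zeros, so the $s-1$ zeros are spread over at most $s-2$ rows, forcing $a \geq 2$. Two all-ones rows are identical, hence linearly dependent, so $M$ is singular. The symmetric argument applied to columns shows that if the zeros fail to occupy $s-1$ distinct columns then $M$ has two identical columns and is again singular. Thus invertibility forces both distinctness conditions.

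For the \emph{if} direction, suppose the zeros occupy $s-1$ distinct rows and $s-1$ distinct columns. Then exactly one row, say $r_0$, and exactly one column, say $c_0$, are all-ones, and the zeros set up a bijection between the rows other than $r_0$ and the columns other than $c_0$. Since permuting rows and columns does not affect invertibility (the determinant only changes by a sign, and $-1 = 1$ over $\eff_2$), I would relabel so that $r_0$ and $c_0$ are the last row and column and the zeros sit on the diagonal of the top-left $(s-1)$ by $(s-1)$ block. This puts $M$ in the canonical form
\[
M = \left( \begin{array}{cc} J_{s-1} - I_{s-1} & \mathbf{1} \\ \mathbf{1}^T & 1 \end{array} \right),
\]
where $\mathbf{1}$ is the all-ones vector. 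Adding the last row to each of the first $s-1$ rows over $\eff_2$ clears every entry of those rows except the diagonal one (each off-diagonal $1$ becomes $1+1=0$, each diagonal $0$ becomes $0+1=1$, and the last-column entry becomes $1+1=0$), leaving a lower-triangular matrix with $1$'s on the diagonal. Hence $M$ is invertible.

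The technical heart is the \emph{if} direction; the main thing to get right is that the all-ones border row and column make $M$ invertible for every $s$, in contrast to Lemma \ref{invertible2}, where $J_s - I_s$ alone is invertible only for even $s$. I would therefore resist trying to invoke Lemma \ref{invertible2} and instead rely on the explicit row reduction above, which handles both parities uniformly.
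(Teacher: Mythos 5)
Your proof is correct, and the ``if'' direction takes a genuinely different route from the paper's. The ``only if'' direction is essentially identical in both: if the zeros miss a row (or column) count of $s-1$, then some row (column) has two zeros, forcing two all-ones columns (rows), which are identical and hence dependent over $\eff_2$. For the ``if'' direction, however, the paper permutes to the canonical form (\ref{M.eq}) with the all-ones row and column placed first and the zeros on the diagonal, and then proves invertibility by induction on $s$: it expands the determinant by cofactors along the first column, evaluates the minor $M_{11} = J_{s-1}-I_{s-1}$ via Lemma \ref{invertible2}, handles the remaining minors by the induction hypothesis, and splits into cases according to the parity of $s$. Your argument replaces all of this with a single round of Gaussian elimination: after permuting so the all-ones row and column come last and the zeros sit on the diagonal of the leading $(s-1)$ by $(s-1)$ block, adding the all-ones row to each of the other rows produces a triangular matrix with unit diagonal. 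This is shorter, needs no induction, no parity case analysis, and no appeal to Lemma \ref{invertible2} --- it makes transparent, as you note, why the bordered matrix is invertible for every $s$ even though $J_s - I_s$ alone is invertible only for even $s$. What the paper's approach buys in exchange is economy of machinery across the section: it reuses Lemma \ref{invertible2}, and the same cofactor/parity bookkeeping reappears when the paper classifies which $t$ by $t$ submatrices of (\ref{M.eq}) are invertible (Theorem \ref{M.bound}); your elimination argument, while cleaner for this lemma in isolation, does not generalize as directly to that count.
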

\begin{proof}
First suppose that there are at least two zero entries in a specific column of $M$.
Then there must exist at least two columns of $M$ that do not contain a zero entry,
and $M$ is not invertible, as in Lemma \ref{invertible1}. A similar conclusion
holds if there exist at least two zero entries in a specific row of $M$.
Therefore we can restrict our attention to the case where the zero entries 
occur in $s-1$ different rows and in $s-1$ different columns. We will show that $M$ is invertible in
this case. 

By permuting rows and columns if necessary (which does not affect invertibility), 
we can assume that $M = (m_{ij})$ has the form
\begin{equation}
\label{M.eq}
M = 
\left(
\begin{array}{cccccc}
1 & 1 & 1 & 1 & \hdots  & 1\\
1 & 0 & 1 & 1 & \hdots  & 1\\
1 & 1 & 0 & 1 & \hdots  & 1\\
\vdots & \vdots & \vdots & \vdots & \ddots & \vdots \\
1 & 1 & 1 & 1 & \hdots & 0
\end{array}
\right) .
\end{equation}

We will prove that $M$ is invertible by induction on $s$. Clearly we can use $s=1$ as a base case.
Now we assume $s \geq 2$ and we evaluate $\det M$ over $\eff_2$ by using a cofactor expansion along the first column.
This yields
\begin{eqnarray*}
 \det M &=& \sum_{i=1}^{s} m_{i1} \times \det(M_{i1}) \\
 &=& \sum_{i=1}^{s} \det(M_{i1}) ,
 \end{eqnarray*} 
 where $M_{i1}$ is the {\it minor} formed by deleting row $i$ and column $1$ of $M$.

We consider two cases, depending on whether $s$ is even or odd. First, suppose that $s$ is odd.
Here, $\det(M_{11}) = 1$ from Lemma \ref{invertible2}, and $\det(M_{i1}) = 1$ for $2 \leq i \leq s$ by induction.
It follows that $\det(M) = s \bmod 2 = 1$.

Now let $s$ be even. We have that $\det(M_{11}) = 0$ from Lemma \ref{invertible2}, and $\det(M_{i1}) = 1$ for $2 \leq i \leq s$ by induction.
It follows that $\det(M) = (s-1) \bmod 2 = 1$. By induction, the proof is complete.
\end{proof}

The following result is an immediate corollary of Lemmas \ref{invertible1} and \ref{invertible3}.
\begin{Theorem}
\label{R1.thm}
For all $s \geq 1$, we have $R_1(s) = 1 - \frac{s-1}{s^2}$.
\end{Theorem}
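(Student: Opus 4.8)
The plan is to reduce the computation of $R_1(s)$ to a counting problem about the number of zero entries in an invertible matrix. First I would observe that a $1$ by $1$ submatrix of $M$ is just a single entry, and over $\eff_2$ it is invertible precisely when that entry equals $1$. Hence $N_1(M)$ is exactly the number of $1$-entries of $M$. Writing $z(M)$ for the number of zero entries, we have $N_1(M) = s^2 - z(M)$, and since $\binom{s}{1}^2 = s^2$, this gives $R_1(M) = 1 - z(M)/s^2$. Maximizing $R_1(M)$ over all invertible $0$-$1$ matrices $M$ is therefore the same thing as \emph{minimizing} $z(M)$ subject to invertibility.

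Next I would pin down this minimum using the two lemmas already proved. Lemma \ref{invertible1} shows that any $0$-$1$ matrix with at most $s-2$ zeros fails to be invertible, so every invertible $M$ satisfies $z(M) \geq s-1$. Conversely, Lemma \ref{invertible3} exhibits invertible matrices with exactly $s-1$ zeros, namely those whose zeros lie in $s-1$ distinct rows and $s-1$ distinct columns. Combining these two facts, the minimum value of $z(M)$ over invertible matrices is exactly $s-1$, and it is attained. Substituting $z(M) = s-1$ then yields $R_1(s) = 1 - (s-1)/s^2$, as claimed. The degenerate case $s=1$ I would dispose of by inspection: the only invertible $1$ by $1$ matrix is $(1)$, which has no zeros, so $R_1(1) = 1$, in agreement with the formula since $(s-1)/s^2 = 0$ there.

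There is no serious obstacle in this argument, since both counting lemmas are established beforehand; the whole content is the translation of ``number of invertible $1$ by $1$ submatrices'' into ``number of ones,'' together with the observation that maximizing ones is equivalent to minimizing zeros. The only points requiring a moment's care are confirming that the extremal matrix of Lemma \ref{invertible3} exists for every $s \geq 2$, and checking the $s=1$ case by hand (where Lemma \ref{invertible3} carries no content).
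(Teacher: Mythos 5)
Your proposal is correct and takes essentially the same approach as the paper, which states the theorem as an immediate corollary of Lemmas \ref{invertible1} and \ref{invertible3}. You have merely spelled out the details the paper leaves implicit: that $N_1(M)$ counts the ones, so maximizing $R_1(M)$ means minimizing the number of zeros over invertible matrices, with the lower bound $s-1$ from Lemma \ref{invertible1} and attainment from Lemma \ref{invertible3} (plus the trivial $s=1$ check).
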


\begin{Remark}
It was shown in 
\cite{St} that $R_1(s) \geq 1 - \frac{1}{s}$ when $s$ is even. This was 
based on using the matrix $J_s - I_s$ as a transform. 
Theorem \ref{R1.thm} is a slight improvement, and it holds for all values of $s$.
\end{Remark}

\begin{Example}
\label{exam-4}
Consider the  $4$ by $4$ matrix given by (\ref{M.eq}):
\[M = 
\left(
\begin{array}{cccc}
1 & 1 & 1 & 1\\
1 & 0 & 1 & 1\\
1 & 1 & 0 & 1\\
1 & 1 & 1 & 0
\end{array}
\right) .
\]
Here, we can verify using Lemmas
\ref{invertible1}, \ref{invertible2} and \ref{invertible3} that $R_{1}(M) = 13/16$,
$R_{2}(M) = 24/36 = 2/3$ and $R_{3}(M) = 9/16$.
\end{Example}

In fact, it is possible to compute all the values
$R_t(M)$ for the $s$ by $s$ matrix $M$ given in (\ref{M.eq}).
There are $\binom{s}{t}^2$ submatrices $N$ of $M$ of dimensions $t$ by $t$. 
From the structure of $M$, and from Lemmas 
\ref{invertible1}, \ref{invertible2} and \ref{invertible3}, we see that 
a $t$ by $t$ submatrix $N$ is invertible if and only if one of the following 
conditions holds:
\begin{enumerate}
\item $N$ contains $t-1$ zero entries, or
\item $t$ is even and $N$ contains $t$ zero entries.
\end{enumerate}
If we can count the number of submatrices of this form, then we can compute
$R_t(M)$. But this is not hard to do.

\begin{Lemma}
The $s$ by $s$ matrix $M$ given in (\ref{M.eq}) has exactly $\binom{s-1}{t-1}(1 + (s-t+1)(s-t))$ 
submatrices that
contain exactly $t-1$ zero entries.
\end{Lemma}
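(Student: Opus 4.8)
The plan is to translate the counting of submatrices directly into a counting of index sets. First I would record the exact location of the zeros in $M$: from \eqref{M.eq}, the only zero entries are the diagonal entries $m_{ii} = 0$ for $2 \leq i \leq s$, so there are $s-1$ zeros, sitting in positions $(2,2), \dots, (s,s)$. A $t$ by $t$ submatrix of $M$ is specified by a choice of a set $R$ of $t$ rows together with a set $C$ of $t$ columns. The zero in position $(i,i)$ lies in this submatrix if and only if $i \in R$ and $i \in C$, i.e.\ $i \in R \cap C$; since position $1$ is a one rather than a zero, the number of zeros appearing in the submatrix equals $|R \cap C \cap \{2, \dots, s\}|$. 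Thus the whole problem reduces to counting ordered pairs $(R,C)$ of $t$-subsets of $\{1, \dots, s\}$ with $|R \cap C \cap \{2, \dots, s\}| = t-1$.

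Next I would count these pairs by first fixing which diagonal zeros are shared. Choose a set $S \subseteq \{2, \dots, s\}$ with $|S| = t-1$; these are the common diagonal indices that will supply the $t-1$ zeros. As $|\{2, \dots, s\}| = s-1$, there are $\binom{s-1}{t-1}$ such choices. Both $R$ and $C$ must contain $S$, and since $|R| = |C| = t$ while $|S| = t-1$, each of $R$ and $C$ has exactly one further element; call them $r$ and $c$, each lying in $\{1, \dots, s\} \setminus S$, a set of size $s - t + 1$. The remaining requirement is that these extra elements introduce no further shared diagonal zero, i.e.\ that $R \cap C \cap \{2, \dots, s\}$ equals $S$ exactly and not something larger.

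The case analysis on $r$ and $c$ then finishes it. If $r \neq c$, then $R \cap C = S$ and the constraint holds automatically, contributing the $(s-t+1)(s-t)$ ordered pairs of distinct elements from a set of size $s-t+1$. If $r = c$, then $R \cap C = S \cup \{r\}$, and to avoid an extra zero we need $r \notin \{2, \dots, s\}$, which forces $r = c = 1$; this is the single admissible possibility. Hence each $S$ yields $(s-t+1)(s-t) + 1$ valid pairs, and multiplying by the $\binom{s-1}{t-1}$ choices of $S$ gives the claimed count $\binom{s-1}{t-1}\bigl(1 + (s-t+1)(s-t)\bigr)$. The delicate step, and the only place where one can easily slip, is the asymmetric role of index $1$: it lies on the diagonal yet is a one, so it is precisely the case $r = c = 1$ that produces the stray ``$+1$'' term. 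I would double-check this against the matrix of Example~\ref{exam-4} ($s=4$), where the formula predicts $13$ submatrices with $0$ zeros when $t=1$ and $21$ submatrices with exactly one zero when $t=2$, both of which are easily confirmed by hand.
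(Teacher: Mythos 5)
Your proof is correct, and it is essentially the intended argument: the paper itself states this lemma without proof (remarking only that the count ``is not hard to do''), so your write-up actually supplies the missing details. Your partition by the common diagonal set $S = R \cap C \cap \{2,\dots,s\}$ is well-defined (each valid pair $(R,C)$ determines $S$ uniquely), the case split on whether the extra row and column indices coincide correctly isolates the special role of index $1$ as the source of the ``$+1$'' term, and your numerical checks against Example~\ref{exam-4} ($13$ submatrices with no zeros for $t=1$, and $21$ with one zero for $t=2$, which together with the $\binom{3}{2}=3$ two-zero submatrices gives the paper's $N_2=24$) confirm the formula.
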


\begin{Lemma}
The $s$ by $s$ matrix $M$ given in (\ref{M.eq}) has exactly $\binom{s-1}{t}$ submatrices that
contain exactly $t$ zero entries.
\end{Lemma}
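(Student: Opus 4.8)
The plan is to reduce the counting problem to a simple observation about which zero entries a given submatrix can capture. Recall that in the matrix $M$ of (\ref{M.eq}) the $s-1$ zero entries occupy the positions $(i,i)$ for $2 \le i \le s$; in particular row $1$ and column $1$ contain no zero, while every other row and every other column contains exactly one zero, sitting on the main diagonal. A $t$ by $t$ submatrix is specified by a set $I$ of columns and a set $J$ of rows with $|I| = |J| = t$, and by inspection $M(I,J)$ displays the zero at position $(i,i)$ precisely when $i \in I$, $i \in J$, and $i \ge 2$. Hence the number of zero entries appearing in $M(I,J)$ equals $|(I \cap J) \cap \{2, \dots , s\}|$, and I would begin by recording this identity.

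The key step is then to show that this quantity can equal $t$ only in a very rigid way. Since $|I| = |J| = t$, we always have $|I \cap J| \le t$, and therefore $|(I \cap J) \cap \{2, \dots , s\}| \le |I \cap J| \le t$. Equality throughout forces $|I \cap J| = t$ together with $I \cap J \subseteq \{2, \dots , s\}$, and $|I \cap J| = t = |I| = |J|$ in turn forces $I = J$. Conversely, any common set $I = J$ of size $t$ contained in $\{2, \dots , s\}$ yields a submatrix whose $t$ diagonal positions are all zero, producing exactly $t$ zeros. Thus the $t$ by $t$ submatrices containing exactly $t$ zero entries are in bijection with the $t$-subsets of $\{2, \dots , s\}$.

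Finally I would simply count these subsets: the set $\{2, \dots , s\}$ has $s-1$ elements, so there are $\binom{s-1}{t}$ of them, which is the claimed value. The only point requiring care is the rigidity argument of the second paragraph, namely making explicit that capturing $t$ zeros with only $t$ chosen rows and $t$ chosen columns leaves no slack: it simultaneously excludes the index $1$ from $I \cap J$ and forces $I$ and $J$ to coincide. Everything else is a direct reading of the zero pattern of $M$, so I expect no serious obstacle beyond stating this pigeonhole-type constraint cleanly.
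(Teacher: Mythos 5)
Your proof is correct. The paper actually states this lemma without proof (dismissing it as ``not hard to do''), and your argument --- identifying the zeros of $M$ as the diagonal positions $(i,i)$ for $2 \le i \le s$, observing that a $t \times t$ submatrix $M(I,J)$ contains $|(I \cap J) \cap \{2,\dots,s\}|$ zeros, and noting that this count can reach $t$ only when $I = J \subseteq \{2,\dots,s\}$ --- is exactly the routine counting the authors have in mind, carried out cleanly and completely.
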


So we now obtain the following.

\begin{Theorem}
\label{M.bound}
Let $M$ be the $s$ by $s$ matrix given in (\ref{M.eq}) and let $1 \leq t \leq s-1$.
If $t$ is odd, then
\[N_t(M) = \binom{s-1}{t-1}(1 + (s-t+1)(s-t)).\]
If $t$ is even, then 
\[N_t(M) = \binom{s-1}{t} + \binom{s-1}{t-1}(1 + (s-t+1)(s-t)).\]
\end{Theorem}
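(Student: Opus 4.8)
The plan is to prove Theorem \ref{M.bound} by combining the two counting lemmas with the invertibility criterion established just before the theorem statement. Recall that a $t$ by $t$ submatrix $N$ of $M$ is invertible if and only if either (i) $N$ contains exactly $t-1$ zero entries, or (ii) $t$ is even and $N$ contains exactly $t$ zero entries. These two cases are mutually exclusive, since they require different numbers of zero entries in $N$. Therefore $N_t(M)$ is simply the sum of the counts of submatrices of each type that yields invertibility. The whole argument is thus a matter of selecting which of the two counting lemmas to invoke depending on the parity of $t$.

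First I would invoke the first counting lemma, which asserts that the number of $t$ by $t$ submatrices containing exactly $t-1$ zero entries is $\binom{s-1}{t-1}(1 + (s-t+1)(s-t))$. This term is always present, regardless of parity, because case (i) of the invertibility criterion applies for every $t$. Next I would split on the parity of $t$. When $t$ is odd, case (ii) cannot occur (a submatrix with $t$ zeros is not invertible when $t$ is odd, by Lemma \ref{invertible2} applied to the relevant structure), so the count coming from case (i) is the entire answer, giving
\[
N_t(M) = \binom{s-1}{t-1}(1 + (s-t+1)(s-t)).
\]
When $t$ is even, case (ii) contributes the additional submatrices with exactly $t$ zero entries; by the second counting lemma this contributes $\binom{s-1}{t}$. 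Adding the two disjoint contributions yields
\[
N_t(M) = \binom{s-1}{t} + \binom{s-1}{t-1}(1 + (s-t+1)(s-t)),
\]
as claimed.

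I should remark on why the two enumerated cases in the invertibility criterion genuinely partition the invertible submatrices and do not overlap or miss anything. A $t$ by $t$ submatrix $N$ of the matrix $M$ in (\ref{M.eq}) can contain at most $t$ zero entries, since the zeros of $M$ all lie on a single (off-main) diagonal pattern and each row and each column of $M$ contains at most one zero. If $N$ has fewer than $t-1$ zeros, then by Lemma \ref{invertible1} it has at most $t-2$ zeros and is non-invertible; if $N$ has exactly $t-1$ zeros, Lemma \ref{invertible3} guarantees invertibility (the zeros necessarily lie in distinct rows and distinct columns because of the diagonal placement in $M$); and if $N$ has exactly $t$ zeros, then $N$ is a matrix of the form $J_t - I_t$ up to row and column permutation, which by Lemma \ref{invertible2} is invertible precisely when $t$ is even. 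This accounts for all possibilities and confirms the case analysis.

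The main obstacle is not the summation itself, which is immediate, but rather justifying the structural claim that a $t$ by $t$ submatrix with exactly $t$ zeros must be a permutation of $J_t - I_t$, and that one with exactly $t-1$ zeros must have those zeros in distinct rows and columns so that Lemma \ref{invertible3} applies. Both facts follow from the observation that in the canonical form (\ref{M.eq}) the zero entries occupy positions $(i,i)$ for $2 \leq i \leq s$, so any submatrix inherits at most one zero per row and per column; a submatrix achieving the maximum of $t$ zeros must select exactly those rows and columns whose indices coincide among the zero-bearing diagonal, forcing the $J_t - I_t$ shape. Once this structural observation is in place, the theorem is an immediate consequence of the two preceding counting lemmas and the parity condition of Lemma \ref{invertible2}.
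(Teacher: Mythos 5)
Your proposal is correct and follows essentially the same route as the paper, which likewise obtains the theorem by combining the stated invertibility criterion (exactly $t-1$ zeros, or $t$ even with exactly $t$ zeros) with the two preceding counting lemmas, splitting on the parity of $t$. The only difference is that you explicitly verify the structural facts the paper leaves implicit---that the zeros of $M$ lie at positions $(i,i)$ for $2 \leq i \leq s$, so a submatrix with $t-1$ zeros has them in distinct rows and columns (Lemma \ref{invertible3}) and a submatrix with $t$ zeros is $J_t - I_t$ up to permutation (Lemma \ref{invertible2})---which is a sound and welcome addition, not a departure.
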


Theorem \ref{M.bound} also provides (constructive) lower bounds on $R_t(s)$ for all values of 
$t \leq s$. We do not claim that these bounds are necessarily good asymptotic bounds, however.
Even for $t=2$, we get $R_2(M) \rightarrow 0$ as $s \rightarrow \infty$,
since $\binom{s-1}{t-1}(1 + (s-t+1)(s-t)) \in \Theta(s^3)$ and $\binom{s}{t}^2 \in \Theta(s^4)$.
This suggests looking for constructions which will yield constant lower bounds on $R_2(s)$.
On the other side, we would also like to find good upper bounds on $R_2(s)$. 

\section{Upper Bounds for $R_2(s)$}
\label{nonexist}

We first establish an easy upper bound for $R_2(s)$.
This bound follows from the following lemma.

\begin{Lemma}
\label{2-lemma}
Any $2$ by $s$ $0-1$ matrix contains at most $s^2/3$ invertible $2$ by $2$ submatrices.
\end{Lemma}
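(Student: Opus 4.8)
The plan is to reduce the statement to a small quadratic optimization over column-type counts. First I would observe that, since the matrix has only two rows, every $2$ by $2$ submatrix is obtained by selecting an (unordered) pair of its $s$ columns, so counting invertible submatrices amounts to counting pairs of columns that span $\eff_2^2$. Each column is a vector in $\eff_2^2$, of which there are exactly four: the zero vector together with the three nonzero vectors $(1,0)^T$, $(0,1)^T$, $(1,1)^T$. Using Lemma \ref{2x2.lem} (equivalently, linear independence over $\eff_2$), a pair of columns yields an invertible $2$ by $2$ submatrix if and only if the two columns are distinct and both nonzero; any pair involving the zero column, or any pair of equal columns, is singular.

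Next I would introduce the type counts. Let $n_1, n_2, n_3$ denote the number of columns equal to each of the three nonzero vectors, and let $n_0$ count the zero columns, so that $n_0 + n_1 + n_2 + n_3 = s$. By the previous paragraph, the number of invertible $2$ by $2$ submatrices is exactly
\[ N = n_1 n_2 + n_1 n_3 + n_2 n_3, \]
a quantity that does not involve $n_0$. Hence $N$ is maximized by taking $n_0 = 0$, and the problem becomes: maximize $n_1 n_2 + n_1 n_3 + n_2 n_3$ subject to $n_1 + n_2 + n_3 = s$ with the $n_i$ nonnegative.

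To finish, I would square the constraint:
\[ s^2 = (n_1 + n_2 + n_3)^2 = (n_1^2 + n_2^2 + n_3^2) + 2N, \]
so that $2N = s^2 - (n_1^2 + n_2^2 + n_3^2)$. Maximizing $N$ is therefore equivalent to minimizing the sum of squares under a fixed sum, and by convexity (the power-mean, or Cauchy--Schwarz, inequality) one has $n_1^2 + n_2^2 + n_3^2 \geq (n_1 + n_2 + n_3)^2 / 3 = s^2/3$, with equality when $n_1 = n_2 = n_3 = s/3$. This gives $2N \leq s^2 - s^2/3 = 2 s^2 / 3$, that is, $N \leq s^2/3$, as required.

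There is no serious obstacle here: the only substantive step is recognizing that the combinatorial count collapses to the symmetric quadratic $N = n_1 n_2 + n_1 n_3 + n_2 n_3$, after which the bound is a one-line application of a standard inequality. This lemma is the two-row base case of the more general quadratic-programming argument, and I expect the genuine difficulty to surface only later, when both dimensions of the matrix grow and the interaction between row-types and column-types can no longer be separated so cleanly.
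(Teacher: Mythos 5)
Your proposal is correct and follows essentially the same route as the paper: classify the columns by their type in $\eff_2^2$, observe that the count of invertible $2$ by $2$ submatrices is the symmetric quadratic $n_1 n_2 + n_1 n_3 + n_2 n_3$, and maximize subject to $n_1+n_2+n_3 \leq s$. The only difference is that where the paper simply asserts the maximum occurs at $a_1=a_2=a_3=s/3$, you justify that step via the identity $2N = s^2 - (n_1^2+n_2^2+n_3^2)$ and the power-mean inequality, which is a welcome (if small) tightening of the argument.
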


\begin{proof}
Let $N$ be any $2$ by $s$ $0-1$ matrix.
Consider the $2$ by $1$  submatrices of $N$. 
Suppose there are $a_0$ occurrences of 
$\left( \begin{array}{c} 0 \\ 0 \end{array} \right)$,
$a_1$ occurrences of 
$\left( \begin{array}{c} 0 \\ 1 \end{array} \right)$,
$a_2$ occurrences of 
$\left( \begin{array}{c} 1 \\ 0 \end{array} \right)$,
and $a_3$ occurrences of 
$\left( \begin{array}{c} 1 \\ 1 \end{array} \right)$.
Of course $a_0 + a_1 + a_2 + a_3  = s$.
From Lemma \ref{2x2.lem}, the number of invertible $2$ by $2$ submatrices in $N$ is easily seen to be
$a_1a_2 + a_1a_3 + a_2a_3$. This expression is maximized when $a_0 = 0$, $a_1 = a_2 = a_3 =s/3$, yielding
$3(s/3)^2 = s^2 / 3$ invertible $2$ by $2$ submatrices.
\end{proof}

\begin{Theorem}
\label{upperbound}
For any $s \geq 2$, it holds that
\[ R_2(s) \leq \frac{2s}{3(s-1)}.\]
\end{Theorem}

\begin{proof}
From Lemma \ref{2-lemma}, in any two rows of $M$ there are at most $s^2/3$
invertible $2$ by $2$ submatrices.
Now, in the entire matrix $M$, there are $\binom{s}{2}$ ways to choose two rows, and
there are $\binom{s}{2}^2$ submatrices of order $2$. This immediately yields
\[ R_2(s) \leq  \frac{\binom{s}{2}(s^2 / 3)}{\binom{s}{2}^2} = \frac{2s}{3(s-1)}.\]
\end{proof}

\begin{Example}
\label{exam3}
When $s=3$, we only get the trivial upper bound $R_2(3) \leq 1$ from Theorem \ref{upperbound}.
Consider the matrix
\[M = 
\left(
\begin{array}{ccc}
0 & 1 & 1\\
1 & 0 & 1\\
1 & 1 & 0
\end{array}
\right) .
\]
It is clear from the proof of
Theorem \ref{upperbound} that all nine $2$ by $2$ submatrices of $M$ are invertible, 
and $M$ is the only $3$ by $3$ matrix with this property.
However, $M$ is not itself invertible, so we can conclude that 
$R_2(3) \leq 8/9$. Example \ref{exam-3} shows that $R_2(3) \geq 7/9$.

In fact, we can show that $R_2(3) = 7/9$. Suppose that $R_2(3) = 8/9$.
Let $R_2(M) = 8/9$. Then we can assume that the first two rows of $M$
contain three invertible $2$ by $2$ submatrices, the first and third rows of $M$
contain three invertible $2$ by $2$ submatrices, and the last two rows of $M$
contain two invertible $2$ by $2$ submatrices.
By permuting columns, the first two rows of $M$ look like:
\[ \left(
\begin{array}{ccc}
0 & 1 & 1\\
1 & 0 & 1
\end{array}
\right).
\] 
In order that the first and third rows 
contain three invertible $2$ by $2$ submatrices, the third row must be 
$1 \: 0 \: 1$ or $1 \: 1 \: 0$. In the first case,  the last two rows of $M$
contain no invertible $2$ by $2$ submatrices, and in the second case, the  last two rows of $M$
contain three invertible $2$ by $2$ submatrices.
We conclude that $R_2(3) < 8/9$, so $R_2(3) = 7/9$.
\end{Example}

\begin{Example}
\label{exam4}
When $s=4$, we get $R_2(4) \leq 8/9$ from Theorem \ref{upperbound}.
Consider the matrix $M = J_4 - I_4$.
$M$ is  invertible from Lemma \ref{invertible2}.
It is easy to check that $30$ of the $2$ by $2$ submatrices of $M$ are invertible.  
Therefore, $R_2(4) \geq 5/6$. 

We can in fact show that $R_2(4) = 5/6$, as follows.
Suppose $R_2(4) > 5/6$. Then there is a $4$ by $4$ $0-1$ matrix $M$ having
at least $31$ invertible $2$ by $2$ submatrices. There are six pairs of rows in $M$,
and $31 > 6 \times 5$, 
so there is at least one pair of rows that contains six invertible $2$ by $2$ submatrices.
But this contradicts Lemma \ref{2-lemma}, where it is shown that the maximum number of
$2$ by $2$ submatrices in two given rows is at most $4^2 / 3 = 16/3 < 6$.
\end{Example}

We next present a generalization of Theorem \ref{upperbound} that leads to an improved upper bound on
$R_2(s)$. The proof of Theorem \ref{upperbound} was
based on upper-bounding the number of invertible $2$ by $2$ submatrices in any two rows
of an $s$ by $s$ matrix $M$. Here we instead determine an upper bound on the number of 
invertible $2$ by $2$ submatrices in any four rows
of $M$. (It turns out that considering three rows at a time yields the same bound
as Theorem  \ref{upperbound}, so we skip directly to an analysis of four rows at a time.)

Label the  non-zero vectors in $\{0,1\}^4$ in lexicographic order as follows:
$b_0 = (0,0,0,0)$, $b_1 = (0,0,0,1)$, $b_2 = (0,0,1,0)$, $b_3 = (0,0,1,1)$, $\dots$,
$b_{15} = (1,1,1,1)$. For $1 \leq i , j \leq 15$, define
$c_{ij}$ to be the number of invertible $2$ by $2$ submatrices in the 
$4$ by $2$ matrix 
$\left( \begin{array}{c|c}
b_i^T & b^T_j
\end{array}
\right)$.
Let $C = (c_{ij})$; note that $C$ is a $15$ by $15$ symmetric matrix with 
zero diagonal such that every off-diagonal
element is a positive integer. This matrix $C$ is straightforward to compute and
it is presented in Figure \ref{C.fig}.

Now define ${\bf z} = (z_1, \dots , z_{15})$ and consider the following quadratic program $\mathcal{Q}$:
\begin{center}
\begin{tabular}{|ll|}
\hline
Maximize & $\frac{1}{2} {\bf z} C {\bf z}^T$ \rule{0pt}{2.6ex} \\ 
subject to & $\sum _{i=1}^{15} z_i \leq 1$
and  $z_i \geq 0$, for all $i$, $1 \leq i \leq 15$.\rule[-1.2ex]{0pt}{0pt}\\
\hline
\end{tabular}
\end{center}
We have the following result.

\begin{Theorem}
\label{better.thm}
For any integer $s \geq 4$, it holds that
\[ R_2(s) \leq \frac{\gamma s}{3(s-1)},\]
where $\gamma$ denotes the optimal solution to $\mathcal{Q}$.
\end{Theorem}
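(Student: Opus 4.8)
The plan is to mimic the structure of the proof of Theorem~\ref{upperbound}, replacing the two-row analysis (Lemma~\ref{2-lemma}) with a four-row analysis governed by the quadratic program $\mathcal{Q}$. First I would fix any four rows of the matrix $M$ and consider the $4$ by $s$ submatrix they determine. Just as in Lemma~\ref{2-lemma}, each column of this submatrix is one of the $16$ vectors in $\{0,1\}^4$; let $z_i' $ denote the number of columns equal to $b_i$ for $1 \leq i \leq 15$ (columns equal to $b_0 = (0,0,0,0)$ contribute nothing and may be ignored). The total number of invertible $2$ by $2$ submatrices contained in these four rows is obtained by summing, over all unordered pairs of columns, the number of invertible $2$ by $2$ submatrices they jointly contribute; by the definition of $c_{ij}$ this total equals $\frac{1}{2}\sum_{i,j} c_{ij} z_i' z_j' = \frac{1}{2}\,{\bf z'} C {\bf z'}^T$, where ${\bf z'} = (z_1', \dots , z_{15}')$ and $\sum_i z_i' \leq s$.

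Next I would pass from this integer count to the quadratic program. Because $\frac{1}{2}{\bf z} C {\bf z}^T$ is a quadratic form (degree-$2$ homogeneous), the substitution ${\bf z} = {\bf z'}/s$ rescales the objective by $s^{-2}$, and the constraint $\sum_i z_i' \leq s$ becomes $\sum_i z_i \leq 1$ with $z_i \geq 0$. Hence the number of invertible $2$ by $2$ submatrices in any four rows is at most $s^2 \gamma$, where $\gamma$ is the optimum of $\mathcal{Q}$; the integrality of the $z_i'$ only makes the true count no larger than this relaxed optimum, so the bound is valid.

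Finally I would carry out the double-counting step exactly as in Theorem~\ref{upperbound}, but now counting over $4$-subsets of rows rather than pairs. Every invertible $2$ by $2$ submatrix of $M$ uses exactly $2$ of the $s$ rows, and each such pair of rows is contained in $\binom{s-2}{2}$ of the $\binom{s}{4}$ four-row subsets. Summing the per-four-row bound $s^2\gamma$ over all $\binom{s}{4}$ subsets therefore counts each invertible $2$ by $2$ submatrix $\binom{s-2}{2}$ times, giving
\[
N_2(M) \leq \frac{\binom{s}{4}\, s^2 \gamma}{\binom{s-2}{2}}.
\]
Dividing by $\binom{s}{2}^2$ and simplifying the binomial coefficients should collapse this to $\gamma s / (3(s-1))$, which is the claimed bound. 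The main obstacle I anticipate is not the double-counting arithmetic (which is routine binomial simplification) but rather confirming that the quadratic-programming relaxation is tight enough to be meaningful and that $\gamma$ is genuinely well-defined and finite; in particular one must check that the per-four-row count really is captured exactly by the form $\frac{1}{2}{\bf z'}C{\bf z'}^T$, i.e.\ that the matrix $C$ correctly records the pairwise contributions and that self-pairs (a column paired with an identical column) contribute nothing, which is consistent with the zero diagonal of $C$.
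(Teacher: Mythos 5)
Your proposal is correct and follows essentially the same argument as the paper's own proof: bounding the count of invertible $2$ by $2$ submatrices in any four rows by the quadratic form $\frac{1}{2}{\bf z}C{\bf z}^T$, rescaling by $s$ to invoke the optimum $\gamma$ of $\mathcal{Q}$, and then double-counting over the $\binom{s}{4}$ four-row subsets with multiplicity $\binom{s-2}{2}$. Your observation that identical column pairs contribute nothing, matching the zero diagonal of $C$, is exactly the detail that makes the quadratic-form expression an exact count, as in the paper.
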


\begin{proof}
Let $M$ be any $s$ by $s$ $0-1$ matrix.
Consider any four rows of $M$, say the first four rows without loss of generality,
and denote the resulting $4$ by $s$ submatrix by $M'$.
For $0 \leq i \leq 15$, suppose there are $a_i$ columns of $M'$ 
that are equal to $b_i^T$. The number $N$ of $2$ by $2$ invertible submatrices of $M'$ 
is equal to $\frac{1}{2} {\bf a} C {\bf a}^T$, where
${\bf a} = (a_1, \dots , a_{15})$ (we can ignore $a_0$ because a zero column does not give rise
to any invertible submatrices). If we now define $z_i = a_i / s$ for all $i$, then
we obtain \[ N = \frac{1}{2} {\bf a} C {\bf a}^T = \frac{s^2}{2} {\bf z} C {\bf z}^T \leq \gamma s^2.\] 

There are $\binom{s}{4}$ ways to choose four rows from $M$. 
The total number of occurrences of invertible $2$ by $2$ submatrices obtained is at most
$\binom{s}{4} \gamma s$. However, each invertible $2$ by $2$ submatrix is included
in exactly $\binom{s-2}{2}$ sets of four rows, so the total number of
invertible $2$ by $2$ submatrices is at most 
\[ \frac{\binom{s}{4} \gamma s^2}{\binom{s-2}{2}}.\]
The total number of $2$ by $2$ submatrices is $\binom{s}{2}^2$, so we obtain the upper bound
\begin{equation}
\label{R_2.eq}
 R_2(s) \leq \frac{\binom{s}{4} \gamma s^2}{\binom{s-2}{2} \binom{s}{2}^2}
= \frac{\gamma s}{3(s-1)}.
\end{equation} 
\end{proof}

\begin{figure}[tb]
\[
C = 
\left(
\begin{array}{ccccccccccccccc}
0 & 1 & 1 & 1 & 1 & 2 & 2 & 1 & 1 & 2 & 2 & 2 & 2 & 3 & 3\\
1 & 0 & 1 & 1 & 2 & 1 & 2 & 1 & 2 & 1 & 2 & 2 & 3 & 2 & 3\\
1 & 1 & 0 & 2 & 3 & 3 & 2 & 2 & 3 & 3 & 2 & 4 & 5 & 5 & 4\\
1 & 1 & 2 & 0 & 1 & 1 & 2 & 1 & 2 & 2 & 3 & 1 & 2 & 2 & 3\\
1 & 2 & 3 & 1 & 0 & 3 & 2 & 2 & 3 & 4 & 5 & 3 & 2 & 5 & 4\\
2 & 1 & 3 & 1 & 3 & 0 & 2 & 2 & 4 & 3 & 5 & 3 & 5 & 2 & 4\\   
2 & 2 & 2 & 2 & 2 & 2 & 0 & 3 & 5 & 5 & 5 & 5 & 5 & 5 & 3\\
1 & 1 & 2 & 1 & 2 & 2 & 3 & 0 & 1 & 1 & 2 & 1 & 2 & 2 & 3\\
1 & 2 & 3 & 2 & 3 & 4 & 5 & 1 & 0 & 3 & 2 & 3 & 2 & 5 & 4\\
2 & 1 & 3 & 2 & 4 & 3 & 5 & 1 & 3 & 0 & 2 & 3 & 5 & 2 & 4\\
2 & 2 & 2 & 3 & 5 & 5 & 5 & 2 & 2 & 2 & 0 & 5 & 5 & 5 & 3\\
2 & 2 & 4 & 1 & 3 & 3 & 5 & 1 & 3 & 3 & 5 & 0 & 2 & 2 & 4\\
2 & 3 & 5 & 2 & 2 & 5 & 5 & 2 & 2 & 5 & 5 & 2 & 0 & 5 & 3\\
3 & 2 & 5 & 2 & 5 & 2 & 5 & 2 & 5 & 2 & 5 & 2 & 5 & 0 & 3\\
3 & 3 & 4 & 3 & 4 & 4 & 3 & 3 & 4 & 4 & 3 & 4 & 3 & 3 & 0
\end{array}
\right).
\]
\caption{The objective function $C$ for the quadratic program}
\label{C.fig}
\end{figure}

In general, it can be difficult to find (global) optimal solutions for
quadratic programs. We were able to  solve our quadratic program $\mathcal{Q}$
using the BARON software \cite{TS} on the NEOS server ({\tt http://www.neos-server.org/neos/}).
The result is that $\gamma = 15/8$ and an optimal solution is given by
$z_7 = z_{11} = z_{13} = z_{14} = 1/4$, $z_i = 0$ if $i \not\in \{7,11,13,14\}$.
It is interesting to observe that this solution corresponds to
the given set of four rows containing only columns consisting of
three 1's and one 0. In fact, when $s=4$, this provides an alternative proof of  
Example \ref{exam4}.

Applying Theorem \ref{better.thm}, we immediately obtain the following
improved upper bound.

\begin{Corollary}
\label{Cor-15}
For any $s \geq 4$, it holds that
\[ R_2(s) \leq \frac{5s}{8(s-1)}.\]
\end{Corollary}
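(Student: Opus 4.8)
The plan is to substitute the computed optimal value $\gamma = 15/8$ directly into the bound of Theorem \ref{better.thm}. Since Corollary \ref{Cor-15} is stated as an immediate consequence of that theorem, essentially all the work has already been done, and the only remaining task is to verify that the stated value of $\gamma$ is genuinely the optimal solution to the quadratic program $\mathcal{Q}$ and then carry out the trivial arithmetic substitution.

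First I would confirm the claimed optimum. The text asserts that BARON produced $\gamma = 15/8$ with the optimal solution $z_7 = z_{11} = z_{13} = z_{14} = 1/4$ and all other $z_i = 0$. To make the corollary self-contained, I would verify the objective value at this candidate point by hand: with exactly four nonzero coordinates each equal to $1/4$, the objective $\frac{1}{2}{\bf z} C {\bf z}^T$ reduces to $\frac{1}{2} \cdot \frac{1}{16} \sum_{i,j \in \{7,11,13,14\}} c_{ij}$, which amounts to summing the relevant off-diagonal entries $c_{7,11}, c_{7,13}, c_{7,14}, c_{11,13}, c_{11,14}, c_{13,14}$ from the matrix $C$ in Figure \ref{C.fig} and dividing appropriately. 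Reading off those entries, all six equal $5$, so the sum is $\frac{1}{2}\cdot\frac{1}{16}\cdot (2 \cdot 6 \cdot 5) = \frac{60}{16} = \frac{15}{8}$, confirming the objective value at the claimed optimizer. The feasibility check is immediate since $\sum z_i = 1 \leq 1$ and all coordinates are nonnegative.

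Then I would simply substitute $\gamma = 15/8$ into Theorem \ref{better.thm}, obtaining
\[
R_2(s) \leq \frac{\gamma s}{3(s-1)} = \frac{(15/8)\, s}{3(s-1)} = \frac{15 s}{24(s-1)} = \frac{5s}{8(s-1)},
\]
valid for all $s \geq 4$, which is precisely the statement of the corollary.

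The main obstacle, strictly speaking, is not in the substitution but in establishing that $15/8$ is the \emph{global} optimum rather than merely a local one or a lower bound from a single feasible point. Verifying the objective value at the stated optimizer only shows $\gamma \geq 15/8$; the reverse inequality $\gamma \leq 15/8$ is what actually makes the corollary an upper bound on $R_2(s)$, and that is the part delegated to the BARON solver. Since the paper treats the corollary as an immediate consequence of Theorem \ref{better.thm} and accepts the solver's output as authoritative, I would do the same and simply cite the established value of $\gamma$, noting that a rigorous hand-proof of global optimality for a dense $15$-variable indefinite quadratic program would be the genuinely hard part if one insisted on avoiding the numerical solver.
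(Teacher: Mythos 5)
Your proposal is correct and takes essentially the same route as the paper: the paper likewise treats the corollary as an immediate substitution of the solver-certified value $\gamma = 15/8$ into Theorem \ref{better.thm}, so the reliance on BARON for global optimality (the $\gamma \leq 15/8$ direction you rightly flag) is shared by the paper itself. One harmless slip: your intermediate fraction should read $\tfrac{30}{16}$ rather than $\tfrac{60}{16}$, though the final value $\tfrac{15}{8}$ and the resulting bound $\tfrac{5s}{8(s-1)}$ are correct.
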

This upper bound is asymptotically equal to $5/8$, which is a definite improvement
over the asymptotic upper bound of $2/3$ obtained from Theorem \ref{upperbound}.

It is of course possible to generalize this approach, by considering $\rho$ rows at a time.
The coefficient matrix $C$ will have $2^{\rho} - 1$ rows and columns. If $\gamma_{\rho}$ denotes the
solution to the related quadratic program, then we obtain the following 
generalization of Theorem \ref{better.thm}.
\begin{Theorem}
\label{better-g.thm}
For any integers $s \geq {\rho} \geq 2$, it holds that
\begin{equation}
\label{general.eq} R_2(s) \leq \frac{4\gamma_{\rho}}{{\rho}({\rho}-1)} \times \frac{s}{s-1}.
\end{equation}
\end{Theorem}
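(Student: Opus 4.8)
The plan is to follow the proof of Theorem \ref{better.thm} essentially verbatim, replacing the four-row analysis with a $\rho$-row analysis and tracking how the binomial coefficients change. The entire argument is a double counting of invertible $2$ by $2$ submatrices, localized to windows of $\rho$ rows, combined with the quadratic-programming bound that defines $\gamma_\rho$.

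First I would fix an arbitrary $s$ by $s$ $0$-$1$ matrix $M$ and consider any choice of $\rho$ of its rows, forming a $\rho$ by $s$ submatrix $M'$. Labeling the $2^\rho - 1$ nonzero vectors of $\{0,1\}^\rho$ as $b_1, \dots, b_{2^\rho-1}$, I let $a_i$ be the number of columns of $M'$ equal to $b_i^T$ and set $\mathbf{a} = (a_1, \dots, a_{2^\rho-1})$. Exactly as in the $\rho = 4$ case, the number $N$ of invertible $2$ by $2$ submatrices of $M'$ equals $\tfrac{1}{2}\mathbf{a} C \mathbf{a}^T$: an unordered pair of distinct columns equal to $b_i^T$ and $b_j^T$ contributes $c_{ij}$ such submatrices, while two equal columns contribute nothing (which is precisely why $C$ has zero diagonal). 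Normalizing $z_i = a_i/s$ produces a feasible point of the quadratic program, since $\sum_i z_i \leq 1$ and $z_i \geq 0$, so $N = \tfrac{s^2}{2}\mathbf{z} C \mathbf{z}^T \leq \gamma_\rho s^2$ by the definition of $\gamma_\rho$.

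Next I would double count. Summing this bound over all $\binom{s}{\rho}$ choices of $\rho$ rows yields at most $\binom{s}{\rho}\gamma_\rho s^2$ incidences. On the other side, a fixed invertible $2$ by $2$ submatrix of $M$ occupies two specific rows and is counted once for every $\rho$-row window containing those two rows, namely $\binom{s-2}{\rho-2}$ times. Hence $N_2(M)\binom{s-2}{\rho-2} \leq \binom{s}{\rho}\gamma_\rho s^2$. Dividing by the total number $\binom{s}{2}^2$ of $2$ by $2$ submatrices, and applying the identity $\binom{s}{\rho}/\binom{s-2}{\rho-2} = s(s-1)/(\rho(\rho-1))$ together with $\binom{s}{2}^2 = s^2(s-1)^2/4$, the expression collapses to $\frac{4\gamma_\rho}{\rho(\rho-1)}\cdot\frac{s}{s-1}$, as claimed. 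As a sanity check, setting $\rho = 4$ recovers $\frac{\gamma_4 s}{3(s-1)}$, matching Theorem \ref{better.thm}.

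I do not expect a genuine obstacle, since the structure is identical to the $\rho = 4$ case; the only points needing care are bookkeeping ones. The multiplicity $\binom{s-2}{\rho-2}$ must be justified, and it implicitly requires $\rho \geq 2$ and $s \geq \rho$, which is exactly the stated hypothesis. One should also confirm that $\tfrac{1}{2}\mathbf{a} C \mathbf{a}^T$ counts each invertible submatrix once without overcounting across the $\binom{\rho}{2}$ row-pairs inside a single window; this is automatic, because $c_{ij}$ already tabulates the invertible $2$ by $2$ submatrices of the $\rho$ by $2$ array with columns $b_i^T$ and $b_j^T$ over all $\binom{\rho}{2}$ row-pairs at once. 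The closing algebraic simplification is routine, but it is the one place where an arithmetic slip could creep in, so I would verify the binomial identity explicitly.
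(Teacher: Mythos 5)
Your proposal is correct and is essentially the paper's own proof: the paper proves Theorem \ref{better-g.thm} by noting that the double-counting argument of Theorem \ref{better.thm} goes through with $\rho$ rows in place of four, so that equation (\ref{R_2.eq}) becomes $R_2(s) \leq \binom{s}{\rho}\gamma_\rho s^2 / \bigl(\binom{s-2}{\rho-2}\binom{s}{2}^2\bigr)$, which simplifies to the stated bound. Your write-up simply makes explicit the steps the paper leaves implicit (the quadratic form $\tfrac12 \mathbf{a}C\mathbf{a}^T$, the multiplicity $\binom{s-2}{\rho-2}$, and the binomial simplification), and in doing so correctly uses $\gamma_\rho s^2$ where the paper's prose for Theorem \ref{better.thm} has a minor typo ($\gamma s$).
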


\begin{proof}
The equation (\ref{R_2.eq}) becomes the following:
\[R_2(s) \leq \frac{\binom{s}{{\rho}} \gamma_{\rho} s^2}{\binom{s-2}{{\rho}-2} \binom{s}{2}^2} = 
\frac{4\gamma_{\rho}}{{\rho}({\rho}-1)} \times \frac{s}{s-1}.\]
\end{proof}
The difficulty in obtaining improved bounds using this approach is that the
optimal solutions $\gamma_{\rho}$ of the quadratic programs are hard to compute.

\section{Constructions}
\label{const.sec}

In the next subsections, we consider five possible construction methods for  AONT with good $2$-density.
The first is exhaustive search.
The second is based on choosing each entry independently at random with an appropriate probability.
The third technique is a recursive technique.
The fourth method is based on using incidence matrices of symmetric BIBDs. 
Our fifth and last approach makes use of classical results concerning cyclotomy and cyclotomic numbers.

\subsection{Exhaustive Searches}
\label{exhaustive.sec}

We used an exhaustive search in order to find an invertible $s\times s$ matrix with the maximum possible number of invertible $2\times 2$  submatrices, for $4\le s\le 8$. The algorithm consists of $s$ nested  loops, each iterating over the possible values in a given row of the matrix. There are $2^s$ possibilities for any given row.
However, any permutation of rows and columns does not affect  either the nonsingularity of the matrix or the number of invertible $2\times 2$ submatrices. Therefore, the search algorithm only generated matrices in which each row has at least as many 1's as  the row immediately above it. Also, if two rows have the same number of 1's, the row
having the smaller representation as a binary number would appear higher. 
These two rules enabled us to search only a ${1}/{s!}$ fraction of the search domain. Finally, we partially restricted column permutations by fixing all the 1's in the first row to occur in the rightmost positions. This also
sped up the search process.
	
The computations for $4\le s \le 8$ were executed on one node on the  Cheriton School of Computer Science server, {\tt linux.cs.uwaterloo.ca}, which has a 64 bit AMD CPU, having a 2.6 GHz clock rate. For $s=9$,
we  attempted to use the same algorithm distributed over 256 processors on {\tt grex.westgrid.ca}. But the search was not finished by the end of the 96 hour time limit. However, it did find a solution  with 783 invertible $2\times 2$ submatrices, which is presented in Example \ref{example9}.

\subsection{Random Constructions}
\label{random.sec}

We investigate the expected number of invertible $2$ by $2$ submatrices in a random
$s$ by $s$ $0-1$ matrix $M$. Suppose every entry of $M$ is chosen to be a 1 with probability $\epsilon$,
independent of the values of all other entries. Using Lemma \ref{2x2.lem}, it is 
easy to see that a specified $2$ by $2$ submatrix is invertible
with probability 
\[ 4 \epsilon^3 (1- \epsilon) + 2 \epsilon^2 (1- \epsilon)^2 = 2 \epsilon^2 (1- \epsilon) (2 \epsilon + 1 - \epsilon) = 
2 \epsilon^2 (1- \epsilon^2).\]
This function is maximized by choosing $\epsilon = \sqrt{1/2}$. The expected number of 
invertible $2$ by $2$ submatrices in $M$ is $\frac{1}{2} \binom{s}{2}^2$ 
(leading to an expected $2$-density of $.5$). Unfortunately, this does
not immediately yield an AONT because it seems difficult to ensure that the constructed
matrix is itself invertible. However, this random construction proves to be a useful method to
obtain good small examples. 

\subsection{Recursive Constructions}
 We now investigate the possibility of constructing ``good'' AONT recursively.
 Specifically, we analyze a type of doubling construction in a particular case. 
 We begin with the $(2,4,2)$-AONT from Example \ref{exam4}. Recall that
 this AONT arises from
 the matrix $J_4 - I_4$ and it achieves the optimal result $R_2(4) = 5/6$. We might try
 to use this matrix to construct a $(2,8,2)$-AONT.
 There are various ways in which we could try to do this; we present one 
 method which leads to a reasonably good outcome. Consider the matrix
 \[
 M = \left(
 \begin{array}{c|c}
 J_4 - I_4 & J_4 - I_4\\ \hline
 J_4 - I_4 & J_4
 \end{array}
 \right).
 \]
 We first need to show that $M$ is invertible. We show that $\det (M) = 1$ as
 follows. Consider a matrix of the form \[
 M = \left(
 \begin{array}{c|c}
 A & B\\ \hline
 C & D
 \end{array}
 \right),
 \]
 where $A,B,C,D$ are square submatrices and $CD = DC$. In this case, it is known
 that $\det (M) = \det(AD - BC)$. 
 
 In our construction, we have 
 $CD = DC = 3J_4 = J_4$, so this formula can be applied.
 We have $A = B = C = J_4-I_4$ and it is easy to check that 
 $BC = I_4$, $AD = J_4$. Therefore $AD - BC = J_4 - I_4$ and 
 $\det (M) = \det(AD - BC) =  \det(J_4 - I_4) = 1$.
 
 Next, we proceed to compute the number of $2$ by $2$ invertible submatrices of $M$.
 We do this by looking at pairs of rows of $M$, say row $i$ and row $j$, and computing the relevant numbers
 $a_0, a_1,a_2 , a_3$ in each case (where we are using the notation from the proof of
 Lemma \ref{2-lemma}).
 We tabulate the results in Table \ref{tab1}.
 
 \begin{table}[tb]
 \caption{$2$ by $2$ invertible submatrices of $M$}
 \label{tab1}
 \begin{center}
 \begin{tabular}{c|c|c}
 $i,j$ & $a_0, a_1,a_2 , a_3$  & \# invertible submatrices \\ \hline
 $ 1 \leq i < j \leq 4$ & $a_1 = 2, a_2 = 2, a_3 = 4$ & 20\\
 $ 5 \leq i < j \leq 8$ & $a_1 = 1, a_2 = 1, a_3 = 6$ & 13\\
 $ 1 \leq i \leq 4, 5 \leq j \leq 8, j \neq i + 4$ & $a_1 = 2, a_2 = 1, a_3 = 5$ & 17\\
 $ 1 \leq i \leq 4, j = i + 4$ & $a_0 = 1, a_1 = 1, a_3 = 6$ & 6
 \end{tabular}
 \end{center}
 \end{table}
 
 The number of occurrences of the four cases enumerated in Table \ref{tab1} is
 (respectively) $6,6,12$ and $4$. Therefore, 
\[ N_2(M) = 6 \times 20 + 6 \times 13 + 12 \times 17 + 4 \times 6 = 426.\]
Finally, we compute 
\[ R_2(M) = \frac{426}{\binom{8}{2}^2} = \frac{426}{784} = .5434.\]
Summarizing, we have the following.
\begin{Theorem}
$N_2(8) \geq 426$ and $R_2(8) \geq .5434$.
\end{Theorem}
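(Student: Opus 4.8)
The plan is to prove both inequalities simultaneously by exhibiting a single invertible $8$ by $8$ $0$-$1$ matrix $M$ whose number of invertible $2$ by $2$ submatrices equals $426$. Since $N_2(8)$ is by definition the maximum of $N_2(M')$ over all invertible $M'$, and $R_2(8) = N_2(8)/\binom{8}{2}^2$, producing one qualifying matrix with $N_2(M) = 426$ immediately yields $N_2(8) \geq 426$ and $R_2(8) \geq 426/784 = .5434$. The candidate is the doubling matrix
\[
M = \left(
\begin{array}{c|c}
J_4 - I_4 & J_4 - I_4\\ \hline
J_4 - I_4 & J_4
\end{array}
\right),
\]
built from the optimal $(2,4,2)$-AONT matrix $J_4 - I_4$ of Example \ref{exam4}. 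Two things must be established: that $M$ is invertible, and that it has exactly $426$ invertible $2$ by $2$ submatrices.

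For invertibility I would use the block-determinant identity $\det M = \det(AD - BC)$, which applies here because the lower blocks commute. With $A = B = C = J_4 - I_4$ and $D = J_4$, one checks over $\eff_2$ that $CD = DC = J_4$ (using $J_4^2 = 0$). The computation then reduces to $AD = J_4$ and $BC = (J_4 - I_4)^2 = I_4$, so $AD - BC = J_4 - I_4$, and Lemma \ref{invertible2} gives $\det(J_4 - I_4) = 1$ since $4$ is even. Hence $\det M = 1$ and $M$ qualifies as an AONT matrix.

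For the count, the strategy is to reduce the global total to a sum of row-pair contributions, exactly as in Lemma \ref{2-lemma}: for each of the $\binom{8}{2} = 28$ pairs of rows, record the column-type multiplicities $(a_0, a_1, a_2, a_3)$ and add $a_1 a_2 + a_1 a_3 + a_2 a_3$. The block structure of $M$ forces only four distinct pair-types, tabulated in Table \ref{tab1}: both rows in the top half ($6$ pairs, contributing $20$ each), both in the bottom half ($6$ pairs, $13$ each), a top row paired with a non-aligned bottom row where $j \neq i+4$ ($12$ pairs, $17$ each), and a top row paired with its aligned partner $j = i+4$ ($4$ pairs, $6$ each). Summing gives $6\cdot 20 + 6\cdot 13 + 12\cdot 17 + 4\cdot 6 = 426$, and therefore $R_2(M) = 426/784 = .5434$.

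The main obstacle is the bookkeeping in this last step: correctly identifying the four pair-types and their column-type profiles. The only genuinely subtle case is the aligned one, $j = i+4$, where rows $i$ and $i+4$ agree except that row $i$ carries the leading $0$ of the top-left block while row $i+4$ has a $1$ in that position; this produces one all-zero column and collapses the contribution to just $6$, whereas every other pair-type has $a_0 = 0$ and contributes substantially more. Confirming these four profiles is a finite, routine verification, so once the cases and their multiplicities $6,6,12,4$ are pinned down, the arithmetic completes the proof.
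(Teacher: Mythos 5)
Your proposal is correct and takes essentially the same route as the paper's own proof: the same doubling matrix, the same block-determinant identity $\det M = \det(AD-BC)$ for invertibility, and the same reduction to four row-pair types with multiplicities $6,6,12,4$ and contributions $20,13,17,6$, summing to $426$. (One cosmetic slip: in the aligned case $j=i+4$ the all-zero column lies in the \emph{left} half, where both blocks are $J_4-I_4$, and the single $(0,1)^T$ column lies in the right half; your stated profile $a_0=1$, $a_1=1$, $a_3=6$ is nonetheless the correct one.)
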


It is interesting to note that this recursive construction yields
a better result than the direct constructions considered previously.
For example, if $M = J_8 - I_8$, then we only get that $N_2 \geq 364$.
Also, Theorem \ref{M.bound} (with $s=8$, $t=2$) only yields $N_2 \geq 322$.

\subsection{Constructions from Symmetric BIBDs}

We next give a construction which potentially achieves similar behaviour as the 
random construction, using symmetric balanced
incomplete block designs (SBIBDs). A \emph{$(v,k,\lambda)$-balanced incomplete 
block design} (BIBD) is a pair $(X,\mathcal{A})$, where $X$ is a set of 
$v$ \emph{points} and $\mathcal{A}$ is a collection of $k$-subsets of $X$ 
called \emph{blocks}, such that every pair of points occurs in exactly $\lambda$ blocks.
Denote $b = |\mathcal{A}|$; it is well-known that $b = \lambda v (v-1) / (k(k-1))$.
It is also the case that every point occurs in exactly $r = bk / v = \lambda(v-1) / (k-1)$ blocks.
A BIBD is \emph{symmetric} if $v = b$. Equivalently, this condition can be expressed
as $r = k$ or $\lambda (v-1) = k(k-1)$.

Suppose $(X,\mathcal{A})$  is a  $(v,k,\lambda)$-BIBD. Denote
$X = \{x_i : 1 \leq i \leq v\}$ and $\mathcal{A} = \{A_j : 1 \leq j \leq b \}$.
The \emph{incidence matrix} of $(X,\mathcal{A})$ is the $v$ by $b$ $0-1$ matrix
$M = (m_{ij})$ where $m_{ij} = 1$ if $x_i \in A_j$, and $m_{ij} = 0$ if $x_i \not\in A_j$.

\begin{Lemma}
\label{SBIBDdet.thm}
Suppose $M$ is the incidence matrix of a symmetric $(v,k,\lambda)$-BIBD.
Then $M$ is invertible over $\eff_2$ if and only if $k$ is odd and $\lambda$ is even.
\end{Lemma}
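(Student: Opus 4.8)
The plan is to exploit the defining identity of a symmetric BIBD, namely $MM^T = (k-\lambda)I_v + \lambda J_v$, and to read it modulo $2$. This identity holds because the $(i,i')$ entry of $MM^T$ counts the blocks containing both $x_i$ and $x_{i'}$: this number is $r = k$ on the diagonal and $\lambda$ off the diagonal, so $MM^T$ has diagonal entries $k$ and off-diagonal entries $\lambda$, which is exactly $(k-\lambda)I_v + \lambda J_v$. Everything that follows is computed over $\eff_2$, so only the parities of $k$ and $\lambda$ matter.

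For the ``if'' direction, suppose $k$ is odd and $\lambda$ is even. Then $k-\lambda$ is odd and $\lambda \equiv 0$, so reducing the identity modulo $2$ gives $MM^T = I_v$ over $\eff_2$. Hence $M^T$ is a two-sided inverse of the square matrix $M$, and $M$ is invertible.

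For the ``only if'' direction I would argue the contrapositive, splitting into the two cases that make up the complement of ``$k$ odd and $\lambda$ even''. If $k$ is even, then each column of $M$ contains exactly $k$ ones (the size of the corresponding block), so the sum of all rows of $M$ equals $(k,\dots,k) \equiv \mathbf{0}$ over $\eff_2$; the rows are therefore linearly dependent and $M$ is singular. If instead $k$ is odd and $\lambda$ is odd, then $k-\lambda$ is even, so the identity reduces to $MM^T = J_v$ over $\eff_2$. Since $J_v$ has rank $1 < v$ for $v \geq 2$, it is singular; but if $M$ were invertible then $MM^T$ would be a product of invertible matrices and hence invertible, a contradiction. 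Thus $M$ is singular in this case as well.

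The argument is short, and the one point that requires care is establishing the identity $MM^T = (k-\lambda)I_v + \lambda J_v$ (in particular, noting that for a symmetric design $r = k$, so the diagonal entry is $k$), together with the small observation that $J_v$ is singular over $\eff_2$. An alternative route avoids the identity and uses the classical determinant formula $\det M = \pm\, k\,(k-\lambda)^{(v-1)/2}$ combined with the fact that $n \equiv n^2 \pmod 2$ for every integer $n$; this yields $\det M \equiv k(k-\lambda) \pmod 2$ directly, and $k(k-\lambda)$ is odd exactly when $k$ is odd and $\lambda$ is even. I expect the mod-$2$ linear-algebra version to be cleaner, since it sidesteps square roots and the Bruck--Ryser--Chowla constraints that the determinant formula implicitly invokes when $v$ is even.
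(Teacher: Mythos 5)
Your proof is correct, but it takes a genuinely different route from the paper. The paper simply cites the classical determinant formula $(\det M)^2 = k^2(k-\lambda)^{v-1}$ and reduces modulo $2$: a square integer is odd exactly when the integer is, so $\det M$ is odd iff $k^2(k-\lambda)^{v-1}$ is odd, i.e., iff $k$ is odd and $\lambda$ is even — a two-line argument, at the cost of invoking a nontrivial cited result. You instead work directly from the design identity $MM^T = (k-\lambda)I_v + \lambda J_v$ over $\eff_2$, which makes the proof self-contained (that identity is where the determinant formula comes from in the first place) and has a concrete payoff: in the invertible case you exhibit the inverse explicitly, $M^{-1} = M^T$, which is useful if one actually wants to compute the transform $\phi$. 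Your case split for the converse is also the right one — note that you could not have handled $k$ even via the $MM^T$ identity alone, since $k$ even and $\lambda$ odd gives $MM^T \equiv I_v + J_v$, which is invertible when $v$ is even (that parameter combination happens to be impossible for a symmetric BIBD, but your row-sum argument sidesteps having to prove this). Two trivial caveats: the singularity of $J_v$ needs $v \geq 2$, and your parenthetical alternative via $\det M = \pm k(k-\lambda)^{(v-1)/2}$ is essentially the paper's proof, except the paper's squared form $(\det M)^2 = k^2(k-\lambda)^{v-1}$ cleanly avoids the integrality and square-root issues you were worried about when $v$ is even.
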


\begin{proof}
It is well-known (see, e.g., \cite{CD}) that
$\det (M)$ is an integer and \[(\det (M))^2 = k^2 (k - \lambda)^{v-1}.\]
Reducing modulo 2, we see that $\det (M) \equiv 1 \bmod 2$ if and only if $k$ is odd and 
$\lambda$ is even.
\end{proof}

\begin{Theorem}
\label{SBIBD.thm}
Suppose $M$ is the incidence matrix of a symmetric $(v,k,\lambda)$-BIBD
where $k$ is odd and $\lambda$ is even.
Then 
\begin{equation}
\label{SBIBD.eq}
R_2(M) = \frac{k^2 - \lambda^2}{\binom{v}{2}}.
\end{equation}
\end{Theorem}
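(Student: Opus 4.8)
The plan is to compute $N_2(M)$ directly by summing over all pairs of rows, exploiting the fact that a symmetric BIBD is highly regular, so that every pair of rows contributes the same number of invertible $2$ by $2$ submatrices. Since the design is symmetric we have $b = v$, so $M$ is a $v$ by $v$ matrix; moreover the hypotheses that $k$ is odd and $\lambda$ is even guarantee, via Lemma \ref{SBIBDdet.thm}, that $M$ is invertible over $\eff_2$, so that $R_2(M)$ is well defined.

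First I would fix a pair of distinct rows of $M$, corresponding to two points $x_i, x_j \in X$, and apply the counting from the proof of Lemma \ref{2-lemma}. In the notation used there, $a_3$ is the number of columns (blocks) in which both rows have a $1$, i.e.\ the number of blocks containing both $x_i$ and $x_j$; by the defining property of a BIBD this is exactly $\lambda$. The quantity $a_1$ (respectively $a_2$) counts the blocks containing $x_j$ but not $x_i$ (respectively $x_i$ but not $x_j$). This is the one place where symmetry is used: in a symmetric BIBD the replication number satisfies $r = k$, so each point lies in exactly $k$ blocks. Hence $a_1 = a_2 = k - \lambda$, and these values are the same for every choice of the pair $\{x_i, x_j\}$.

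By Lemma \ref{2-lemma}, the number of invertible $2$ by $2$ submatrices in this pair of rows is $a_1 a_2 + a_1 a_3 + a_2 a_3$. Substituting $a_1 = a_2 = k - \lambda$ and $a_3 = \lambda$ and factoring gives
\[
(k-\lambda)^2 + 2\lambda(k-\lambda) = (k-\lambda)\big((k-\lambda) + 2\lambda\big) = (k-\lambda)(k+\lambda) = k^2 - \lambda^2 .
\]
Since this count is independent of the chosen pair, summing over all $\binom{v}{2}$ pairs of rows yields $N_2(M) = \binom{v}{2}(k^2 - \lambda^2)$, and dividing by the total number $\binom{v}{2}^2$ of $2$ by $2$ submatrices gives the claimed value of $R_2(M)$.

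The argument is essentially a direct calculation, so I do not expect a serious obstacle; the only substantive point is recognizing that the regularity of the design forces $a_0, a_1, a_2, a_3$ to be constant over all pairs of rows, with the equality $r = k$ (and hence $a_1 = a_2 = k - \lambda$) being exactly where the symmetric hypothesis enters. Everything else is the algebraic simplification to $k^2 - \lambda^2$ together with the count of row-pairs.
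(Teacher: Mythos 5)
Your proposal is correct and follows essentially the same route as the paper's own proof: invertibility via Lemma \ref{SBIBDdet.thm}, then for each pair of rows computing $a_1 = a_2 = k-\lambda$ (using $r=k$) and $a_3 = \lambda$, applying the count $a_1a_2+a_1a_3+a_2a_3 = k^2-\lambda^2$ from the proof of Lemma \ref{2-lemma}, and summing over the $\binom{v}{2}$ row pairs. The only difference is expository: you make explicit where the symmetric hypothesis ($r=k$) enters, which the paper leaves implicit.
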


\begin{proof}
First, since $k$ is odd and $\lambda$ is even, $M$ is invertible over $\eff_2$ by Lemma \ref{SBIBDdet.thm}.
Consider two rows of $M$ and define $a_0, a_1, a_2, a_3$ as in the proof of Theorem 
\ref{upperbound}. Using the fact that $M$ is the incidence matrix of a 
symmetric $(v,k,\lambda)$-BIBD, it is not hard to see that 
$a_0 = v-2k+\lambda$, $a_1 = a_2  = k - \lambda$ and $a_3 = \lambda$.
Then we can compute 
\[ a_1a_2 + a_1a_3 + a_2a_3= 2\lambda(k-\lambda) + (k - \lambda)^2 = k^2 - \lambda^2.\]
From this, we have $N_2(M) = \binom{v}{2}(k^2 - \lambda^2)$ and (\ref{SBIBD.eq}) is easily derived.
\end{proof}

Let's try to figure out the best result that we could possibly obtain from Theorem \ref{SBIBD.thm}.
Suppose $k \approx cv$. Then from the equation $\lambda (v-1) = k(k-1)$, we see that
$\lambda \approx c^2 v$. Substituting into (\ref{SBIBD.eq}), we get
$R_2(M) \approx 2(c^2 - c^4)$. Now we of course have $0 \leq c \leq 1$, and 
the function $2(c^2 - c^4)$ is maximized when $c = \sqrt{1/2}$.
In this case, we would get $R_2(M) \approx 1/2$, more-or-less matching the random construction
from Section \ref{random.sec}. We have also guaranteed that the matrix $M$ is invertible.
Of course, we would require a suitable
SBIBD in order to get close to this bound.

We consider some examples to illustrate the application of Theorem \ref{SBIBD.thm}.

\begin{Example}
It is known \cite{CD} that there is a $(31,21,14)$-SBIBD. Noting that $21$ is odd and $14$ is even,
the incidence matrix of this design is invertible over $\eff_2$ 
by Lemma \ref{SBIBDdet.thm}. Observe that
$21/31$ is quite close to $\sqrt{1/2}$, so we expect a good result. 
Applying Theorem \ref{SBIBD.thm},
we get 
\[ R_2(M) = \frac{21^2 - 14^2}{\binom{31}{2}} = \frac{49}{93} \approx .5269.\]
\end{Example}

\begin{Example}
\label{40.ex}
There also exists $(40,27,18)$-SBIBD (see \cite{CD}). Noting that $27$ is odd and $18$ is even,
the incidence matrix of this design is invertible over $\eff_2$ 
by Lemma \ref{SBIBDdet.thm}. 
Applying Theorem \ref{SBIBD.thm},
we get 
\[ R_2(M) = \frac{27^2 - 18^2}{\binom{40}{2}} = \frac{27}{52} \approx .5192.\]
\end{Example}

\begin{Example}
A $(4m-1,2m-1,m-1)$-SBIBD is called a \emph{Hadamard design}. 
If $m$ is odd, then $\lambda = m-1$ is even. Certainly
$k = 2m-1$ is odd, so the incidence matrix $M$ is invertible, by Lemma \ref{SBIBDdet.thm}. 
These SBIBDs are known to exist for infinitely
many (odd) values of $m$, e.g., whenever $4m-1 \equiv 3 \bmod 8$ is
a prime or a prime power (see \cite{CD}). From the incidence matrix of such a BIBD, we obtain
\[ R_2(M) = \frac{(2m-1)^2 - (m-1)^2}{\binom{4m-1}{2}} \approx 3/8.\]
\end{Example}


\begin{Example}
Here we make use of  a classic result based on difference sets.
Suppose $q = 4t^2+9$ is prime and $t$ is odd. In this situation, it was shown
by E.\ Lehmer  that the 
quartic residues modulo $q$, together with $0$, form a difference set
which generates a $(q,(q+3)/4,(q+3)/16)$-SBIBD (e.g., see \cite[p.\ 116]{CD}).
If we complement  this design (i.e., we replace all 0's by 1's 
and all 1's by 0's in the incidence matrix), the result is a $(q,3(q-1)/4,3(3q-7)/16)$-SBIBD.
This SBIBD will have $k$ odd and $\lambda$ even, so its incidence matrix 
$M$ is invertible, by Lemma \ref{SBIBDdet.thm}. 
The first example is obtained when $t=5$, yielding 
\[ R_2(109) \geq \frac{329}{654}.\]
Asymptotically, from (\ref{SBIBD.eq}), we obtain 
\[ R_2(M) \approx \frac{63}{128}\] if there exist sufficiently large $q$
of the desired form. However, it is a famous unsolved conjecture that
there exist infinitely many primes of the form $x^2 + 9$, so we are not in
a position to claim that this asymptotic result holds.
\end{Example}

The following theorem generalizes Example \ref{40.ex}.

\begin{Theorem}
\label{geom.thm}
Suppose $m$ is a positive integer and $s = (3^{m+1} - 1)/2$.
Then \[ R_2(s) \geq  \frac{40 \times 3^{2m-3}}{(3^{m+1} - 1)(3^{m} - 1)}.\]
\end{Theorem}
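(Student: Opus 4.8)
The plan is to exhibit an explicit infinite family of symmetric BIBDs whose parameters are forced by $s = (3^{m+1}-1)/2$, verify that their incidence matrices are invertible over $\eff_2$, and then read off the bound directly from Theorem \ref{SBIBD.thm}. Since Example \ref{40.ex} (the case $s=40$) used the $(40,27,18)$-SBIBD and $40 = (3^4-1)/2$ corresponds to $m=3$, the natural generalization is the point--hyperplane incidence design of the projective space $PG(m,3)$. This is the classical symmetric $2$-design with
\[ v = \frac{3^{m+1}-1}{2}, \qquad k = \frac{3^m-1}{2}, \qquad \lambda = \frac{3^{m-1}-1}{2},\]
whose complement (obtained by interchanging $0$'s and $1$'s in the incidence matrix) is again a symmetric BIBD, now with parameters
\[ v = \frac{3^{m+1}-1}{2}, \qquad k = v - \frac{3^m-1}{2} = 3^m, \qquad \lambda = v - 2\cdot\tfrac{3^m-1}{2} + \tfrac{3^{m-1}-1}{2} = 2\cdot 3^{m-1}.\]
I would work with this complemented design throughout. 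For $m=3$ it recovers $(40,27,18)$, and for $m=1$ it recovers $J_4 - I_4$ with $v=4$, which is reassuring.

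First I would record the existence of these designs: $PG(m,3)$ exists for every $m \geq 1$, its points and hyperplanes form a symmetric design, and block complementation preserves symmetry. Next I would check the hypothesis of Theorem \ref{SBIBD.thm} via Lemma \ref{SBIBDdet.thm}: here $k = 3^m$ is odd for every $m$, and $\lambda = 2\cdot 3^{m-1}$ is even for every $m \geq 1$, so the incidence matrix $M$ is invertible over $\eff_2$. Note that the uncomplemented design would fail this test whenever $m$ is even (there $k = (3^m-1)/2$ is even), which is precisely why complementing is essential to obtaining a bound valid for all $m$.

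With invertibility in hand, Theorem \ref{SBIBD.thm} gives $R_2(M) = (k^2 - \lambda^2)/\binom{v}{2}$. Substituting $k = 3^m$ and $\lambda = 2\cdot 3^{m-1}$ yields $k^2 - \lambda^2 = 3^{2m} - 4\cdot 3^{2m-2} = 5\cdot 3^{2m-2}$, while $\binom{v}{2} = \tfrac12 v(v-1)$ factors as $\tfrac{3}{8}(3^{m+1}-1)(3^m-1)$ after writing $v-1 = \tfrac{3}{2}(3^m-1)$. Dividing these two quantities and simplifying produces exactly $40\cdot 3^{2m-3}/((3^{m+1}-1)(3^m-1))$. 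Since $R_2(s) \geq R_2(M)$ by the definition of $R_2(s)$, the claimed lower bound follows.

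There is no serious obstacle beyond identifying the correct design family: the existence of $PG(m,3)$ and the behaviour of its parameters under complementation are standard, and the remaining work is the parity check of Lemma \ref{SBIBDdet.thm} together with the algebraic simplification of the formula from Theorem \ref{SBIBD.thm}. The one genuinely important decision is to complement, since that is what makes $k$ odd and $\lambda$ even simultaneously for every $m$.
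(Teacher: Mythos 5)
Your proposal is correct and follows exactly the paper's own proof: both take the point--hyperplane design of $PG(m,3)$, complement it to obtain a symmetric $\left(\frac{3^{m+1}-1}{2},\, 3^m,\, 2\cdot 3^{m-1}\right)$-BIBD with $k$ odd and $\lambda$ even, invoke Lemma \ref{SBIBDdet.thm} and Theorem \ref{SBIBD.thm}, and simplify. Your added observation that the uncomplemented design fails the parity test for even $m$ is a nice justification of the complementation step, which the paper performs without comment.
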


\begin{proof}
The points and hyperplanes of the $m$-dimensional projective geometry over
$\eff_3$ yield a 
$\left(\frac{3^{m+1} - 1}{2}, \frac{3^{m} - 1}{2}, \frac{3^{m-1} - 1}{2}\right)$-SBIBD.
If we complement this design, we get a
$\left(\frac{3^{m+1} - 1}{2}, 3^m, 2\times 3^{m-1}\right)$-SBIBD.
This design has $k$ odd and $\lambda$ even, so we can apply Theorem \ref{SBIBD.thm}.
The result is that
\[ R_2\left(\frac{3^{m+1} - 1}{2}\right) \geq 
\frac{(3^{m})^2 - (2\times 3^{m-1})^2}{\binom{\frac{3^{m+1} - 1}{2}}{2}}
= \frac{40 \times 3^{2m-3}}{(3^{m+1} - 1)(3^{m} - 1)}.\]
\end{proof}

Let's examine the asymptotic behaviour of the result proven in Theorem \ref{geom.thm}.
The SBIBD has $k \approx 2v/3$ and $\lambda \approx 4v/9$.
It then follows from (\ref{SBIBD.eq}) that
\[ R_2(M)  = \frac{k^2 - \lambda^2}{\binom{v}{2}} \approx 
2\left(\left(\frac{2}{3}\right)^2 - \left(\frac{4}{9}\right)^2\right) = \frac{40}{81}.\]

Therefore, we obtain the following corollary.
\begin{Corollary}
It holds that $\limsup _{s \rightarrow \infty} R_2(s) \geq \frac{40}{81}$.
\end{Corollary}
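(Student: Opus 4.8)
The plan is to deduce the Corollary directly from Theorem \ref{geom.thm} by a limiting argument, exploiting the fact that $\limsup$ only requires good behaviour along a subsequence rather than for every $s$. First I would recall that Theorem \ref{geom.thm} furnishes, for each positive integer $m$, a specific value $s_m = (3^{m+1} - 1)/2$ together with the lower bound
\[ R_2(s_m) \geq \frac{40 \times 3^{2m-3}}{(3^{m+1} - 1)(3^{m} - 1)}. \]
Since $s_m$ is strictly increasing and $s_m \to \infty$ as $m \to \infty$, the sequence $(s_m)$ is an admissible subsequence along which to evaluate the $\limsup$.

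The second step is a routine asymptotic evaluation of the right-hand side. I would factor out the dominant powers of $3$: the numerator equals $\tfrac{40}{27}\,3^{2m}$, while the denominator satisfies $(3^{m+1} - 1)(3^{m} - 1) = 3^{2m+1}\bigl(1 - 3^{-m-1}\bigr)\bigl(1 - 3^{-m}\bigr)$. Dividing, the $3^{2m}$ factors cancel and the lower bound becomes
\[ \frac{40}{81} \cdot \frac{1}{\bigl(1 - 3^{-m-1}\bigr)\bigl(1 - 3^{-m}\bigr)}, \]
whose limit as $m \to \infty$ is $40/81$, since both correction factors tend to $1$.

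Finally, I would assemble these observations. Because $R_2(s_m)$ is bounded below by a quantity converging to $40/81$, we have $\liminf_{m \to \infty} R_2(s_m) \geq 40/81$, and hence
\[ \limsup_{s \rightarrow \infty} R_2(s) \geq \limsup_{m \rightarrow \infty} R_2(s_m) \geq \frac{40}{81}, \]
the first inequality holding because $\{s_m\}$ is a subset of the positive integers tending to infinity. I do not anticipate a genuine obstacle here: the only point requiring care is the logical one, namely that Theorem \ref{geom.thm} controls $R_2$ only on the sparse set $\{s_m\}$ and says nothing about intermediate values of $s$, but this is exactly why the conclusion is phrased as a $\limsup$ lower bound rather than an ordinary limit. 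The arithmetic simplification of the $3$-power expression is the most mechanical part and presents no difficulty.
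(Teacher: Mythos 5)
Your proposal is correct and follows essentially the same route as the paper: both deduce the bound from the infinite family of complemented projective-geometry SBIBDs behind Theorem \ref{geom.thm} and compute the limit of the resulting $2$-density along the subsequence $s_m = (3^{m+1}-1)/2$. The only difference is stylistic --- the paper evaluates the limit informally via the parameter approximations $k \approx 2v/3$, $\lambda \approx 4v/9$ in (\ref{SBIBD.eq}), whereas you carry out the equivalent (and slightly more rigorous) limit computation directly on the closed-form bound of Theorem \ref{geom.thm}.
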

We note that $40/81 \approx .494$. So there is a  gap between our
upper and lower asymptotic bounds on $2$-density, which are respectively
.625 (from Corollary \ref{Cor-15}) and .494 (and of course the lower bound only has been proven to hold 
within a certain infinite class
of examples).

\subsection{Constructions using Cyclotomy}

We now look at constructions using cyclotomy.
Let $p = 4f+1$ be prime, where $f$ is even, and let $\nu \in {\eff_p}^*$ be a primitive element.
Let $C_0 = \{\nu^{4i}: 0 \leq i \leq f-1 \}$; this is the unique subgroup of ${\eff_p}^*$ having order $f$. The multiplicative cosets of $C_0$ are $C_j = \nu^j C_0$, for $j = 0,1,2,3$. 
These cosets are often called \emph{cyclotomic classes}. 

We now construct a $p$ by $p$ $0-1$ matrix $M' = (m_{ij})$ from $C_0$.
The rows and columns of $M'$ are indexed by $\eff_p$,
and $m_{ij} = 1$ if and only if $j - i \in C_0$.
Note that the $i$th row of $M'$ is the incidence vector of $i + C_0$.
Finally, define $M$ to be the complement of $M'$ (i.e., replace all 1's by 0's and vice versa).

We will now compute the number of invertible $2$ by $2$ submatrices of $M$.
Consider rows $i_1$ and $i_2$ of $M$. It is obvious that the number of 
invertible $2$ by $2$ submatrices contained in these two rows is the same as the
number of invertible $2$ by $2$ submatrices contained in rows $0$ and $d$,
where $d = i_1 - i_2$. We can compute this number if we can determine the 
number $n_{d}$ of columns
$c$ such that $m_{0c} = m_{d c} = 1$. 
It is clear that 
\[n_{d} = | C_0 \cap (d + C_0)| .\]
However, \[| C_0 \cap (d + C_0)| = | d^{-1} C_0 \cap (1 + d^{-1} C_0)|.\]
Now, $d^{-1} C_0 = C_j$, for some $j$, $0 \leq j \leq 3$,
so \[n_{d} = | C_j \cap (1 + C_j)| \]
for this particular value of $j$.
This quantity is a \emph{cyclotomic number of order $4$} and is denoted by $(j,j)$.

We will make use of the following theorem from \cite{KaRa}.

\begin{Theorem}\cite[Theorem 1]{KaRa}
\label{cyc.thm}
Suppose $p = 4f+1$ is prime and $f$ is even. Let $\nu \in \eff_q$ be a primitive element.
Let $p = \alpha^2 + \beta^2$, where $\alpha \equiv 1 \bmod{4}$ and
$\nu^{f} \equiv \alpha/\beta \bmod{p}$. Then the cyclotomic numbers 
$(j,j)$ ($0 \leq j \leq 3$),
are as follows:
\begin{align*}
(0,0)  &= A_0 = \frac{p-11-6\alpha}{16} = \frac{4f-10-6\alpha}{16}\\
(1,1)  &= A_1 = \frac{p-3+2\alpha-4\beta}{16} = \frac{4f-2+2\alpha-4\beta}{16}\\
(2,2)  &= A_2 = \frac{p-3+2\alpha}{16} = \frac{4f-2+2\alpha}{16}\\
(3,3)  &= A_3 = \frac{p-3+2\alpha+4\beta}{16} = \frac{4f-2+2\alpha+4\beta}{16}.
\end{align*}
\end{Theorem}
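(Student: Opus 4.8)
The plan is to treat this as a classical computation of the cyclotomic numbers of order $4$ via character sums and Jacobi sums; the representation $p = \alpha^2 + \beta^2$ enters because the relevant Jacobi sum for a quartic character factors in $\zed[\sqrt{-1}]$. First I would fix a quartic multiplicative character $\chi$ of ${\eff_p}^*$ normalized by $\chi(\nu) = \sqrt{-1}$, so that $x \in C_j$ exactly when $\chi(x) = (\sqrt{-1})^{j}$. Writing the indicator of the event $x \in C_j$ as $\frac{1}{4}\sum_{a=0}^{3} (\sqrt{-1})^{-aj}\chi^a(x)$, and the indicator of $x+1 \in C_{j'}$ analogously, the definition of $(j,j')$ as the number of $x$ with $x \in C_j$ and $x+1 \in C_{j'}$ expands into $\frac{1}{16}\sum_{a,b}(\sqrt{-1})^{-aj-bj'}\sum_{x} \chi^a(x)\chi^b(x+1)$. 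Each inner sum is (up to a substitution) a Jacobi sum $J(\chi^a,\chi^b)$, so the problem is reduced to evaluating finitely many Jacobi sums.

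Second, I would record the elementary linear relations among the sixteen numbers $(i,j)$. Because $f$ is even we have $-1 \in C_0$, which yields the symmetry $(i,j) = (j,i)$ together with the shift relation $(i,j) = (-i,\,j-i)$; combined with the row-sum identities $\sum_{j}(i,j) = f - \delta_{i,0}$ (coming from counting $x \in C_i$ with $x+1 \neq 0$ and noting that only $x=-1 \in C_0$ is excluded), these collapse the sixteen unknowns down to a handful of independent values and, in particular, express the four diagonal entries in terms of $p$, $\alpha$, and $\beta$ up to a small linear system.

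Third comes the arithmetic input. Since $\chi$ is quartic, $\chi^2$ is the quadratic character, so $J(\chi,\chi)$ is a non-degenerate Jacobi sum of absolute value $\sqrt{p}$ lying in $\zed[\sqrt{-1}]$; hence $J(\chi,\chi) = \alpha + \beta\sqrt{-1}$ with $\alpha^2 + \beta^2 = p$. Matching the real and imaginary parts of the character-sum expansion from the first step against these Jacobi-sum values then pins the diagonal cyclotomic numbers to the stated expressions. The delicate point — and the step I expect to be the main obstacle — is the normalization: the bare relations determine $\alpha,\beta$ only up to sign and interchange, and the linear combinations appearing in $(1,1)$ and $(3,3)$ differ solely in the sign of the $4\beta$ term. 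The hypotheses $\alpha \equiv 1 \bmod 4$ and $\nu^{f} \equiv \alpha/\beta \bmod p$ are precisely what remove this ambiguity, so the real work is to verify that these two normalizations are consistent with the chosen $\chi$ and force the signs exactly as written.

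This sign-tracking in the factorization of the Jacobi sum is the classically error-prone part of the argument, and it is the reason one wants a careful reference rather than a quick recomputation. Accordingly, since the statement is quoted verbatim as \cite[Theorem~1]{KaRa}, I would carry the derivation only to the point of exhibiting the linear system and the Jacobi-sum evaluation, and defer the detailed sign normalization to that source.
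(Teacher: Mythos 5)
The first thing to say is that the paper does not prove this statement at all: Theorem~\ref{cyc.thm} is imported verbatim as \cite[Theorem 1]{KaRa}, so there is no internal proof to compare yours against. Your sketch of the classical route is sound as far as it goes: the indicator expansion of membership in $C_j$ by a quartic character $\chi$ with $\chi(\nu)=\sqrt{-1}$, the reduction of $(j,j')$ to Jacobi sums $J(\chi^a,\chi^b)$, the relations you invoke (when $f$ is even, $-1=\nu^{2f}\in C_0$, which gives $(i,j)=(j,i)$; the substitution $x\mapsto 1/x$ gives $(i,j)=(-i,\,j-i)$; and the row sums are $\sum_j (i,j) = f-\delta_{i,0}$), and the evaluation $J(\chi,\chi)=\alpha+\beta\sqrt{-1}$ with $\alpha^2+\beta^2=p$ are all correct pieces of the standard theory going back to Gauss. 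You have also correctly located where the real content lies: that theory determines $\alpha$ uniquely (via the congruence condition mod $4$) but fixes $\beta$ only up to sign, and since $A_1$ and $A_3$ differ exactly in the sign of the $4\beta$ term, the theorem is empty until that sign is tied to the primitive element through $\nu^{f}\equiv \alpha/\beta \bmod p$. Resolving that ambiguity is precisely the contribution of \cite{KaRa} --- its title is literally ``Resolution of the sign ambiguity in the determination of the cyclotomic numbers of order 4'' --- so deferring that step to the reference is not a gap relative to the paper; it is exactly what the paper itself does. A fully self-contained argument would additionally require the congruence analysis of the quartic Jacobi sum in $\zed[\sqrt{-1}]$ (its residue modulo $2(1+\sqrt{-1})$ and the Jacobsthal-sum identification of $\beta$) that pins both signs, but for the purposes of this paper the citation suffices, and your proposal is consistent with, indeed slightly more informative than, the paper's treatment.
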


\begin{Remark}
A prime $p \equiv 1 \bmod 4$ can be expressed as the sum of two squares in a unique manner.
If $p = \alpha^2 + \beta^2$, then one of $\alpha, \beta$ is odd and the other is even. 
So without loss of generality we can take $\alpha$ to be odd and $\beta$ to be even. 
In this way, $\alpha$ and $\beta$ are determined up to their signs. The condition
$\alpha \equiv 1 \bmod{4}$ now determines $\alpha$ uniquely, and, similarly, 
$\nu^{f} \equiv \alpha/\beta \bmod{p}$ determines $\beta$ uniquely.
\end{Remark}

Now we can compute the number of $2$ by $2$ submatrices contained in rows
$i_1$ and $i_2$ of $M$. Again we define $a_0, a_1, a_2, a_3$ as in the proof of Theorem 
\ref{upperbound}. Recalling that $M$ is the complement of $M'$, we have 
\[a_1 = a_2 = f - (j,j)\]
and \[a_3 = p - 2f + (j,j) = 2f+1 + (j,j),\] where $(i_1 - i_2)^{-1} C_0 = C_j$.
Thus we obtain
\[ a_1a_2 + a_1a_3 + a_2a_3 = 5f^2 + 2f - (j,j) (4f + 2 + (j,j)).\] 

As we consider all $\binom{p}{2}$  pairs $\{i_1,i_2\}$ with $i_1 \neq i_2$,
we see that $(j,j)$ takes on each of the four possible values $A_i$ ($1 \leq i \leq 4$)
one quarter of the time.
Therefore the total number of invertible $2$ by $2$ submatrices in $M$ is
\begin{eqnarray*}
 \lefteqn{
 \binom{p}{2} \sum_{i=0}^{3} \left( \frac{5f^2 + 2f - A_i (4f + 2 + A_i)}{4} \right)
 }\\
 & = & \binom{p}{2} \frac{252f^2 +168f +25 -3\alpha^2-2\beta^2-6\alpha}{64},
\end{eqnarray*}
where the last line is obtained by applying the formulas given in Theorem \ref{cyc.thm}.

\begin{Example}
\label{ex17}
Suppose $p = 17 = 4 \times 4 + 1$. 
Then $\nu = 3$ is a primitive element.
Since $17 = 1^2 + 4^2$, we have $\alpha = 1$ and $\beta \in \{4,13\}$.
We compute $3^{4} \equiv 13 \bmod 17$. Since $1/4 \equiv 13 \bmod{17}$,
we have $\beta = 4$.
The cyclotomic classes are 
\begin{eqnarray*}
C_0 &=& \{1, 13,16 ,4\}\\
C_1 &=& \{3,5, 14,12\}\\
C_2 &=& \{9,15,8,2\}\\
C_3 &=& \{10,11,7,6\}.
\end{eqnarray*}
The cyclotomic numbers can now be computed from Theorem \ref{cyc.thm}; they are
\begin{align*}
(0,0)  &= A_0 = \frac{17-11-6}{16} = 0\\
(1,1)  &= A_1 = \frac{17-3+2-4\times 4}{16} = 0\\
(2,2)  &= A_2 = \frac{17-3+2}{16} = 1\\
(3,3)  &= A_3 = \frac{17-3+2+4\times 4}{16} = 2.
\end{align*}
The total number of invertible $2$ by $2$ submatrices in $M$ is
$9962$.
\end{Example}

\begin{table}[tb]
\caption{Examples from Cyclotomy}
\label{cyc.tab}
\[
\begin{array}{rrrrrc}
p & f & \alpha & \beta & N_2(M) & R_2(M) \\ \hline
17 &  3 &  1 &  4 &  9962 &  .53860\\
97 &  5 &  9 &  -4 &  10831020 &  .49962\\
193 &  5 &  -7 &  12 &  170314008 &  .49613\\
241 &  7 &  -15 &  4 &  414228390 &  .49527\\
401 &  3 &  1 &  -20 &  3177945050 &  .49408\\
433 &  5 &  17 &  -12 &  4320175230 &  .49388\\
449 &  3 &  -7 &  20 &  4995836216 &  .49388
\end{array}
\]
\end{table}

It remains to consider the invertibility of the matrices $M$ constructed above. 
The matrices in question are cyclic. Suppose a $p$ by $p$ cyclic $0-1$ matrix $M$ has as its initial row
the vector $(m_0, \dots , m_{p-1})$. We associate with this vector the polynomial
\[m(x) = \sum_{i=0}^{p-1} m_i x^i \in \zed_2[x].\] It is easy to see that $M$ is invertible
if and only if $\gcd( m(x), x^p-1) = 1$. In this case, the inverse $m^{-1}(x)$ of $m(x)$ is defined 
in the quotient ring $\zed_2[x] / (x^p-1)$. The cyclic matrix whose first row is determined by
$m^{-1}(x)$  will in fact be the inverse of $M$.
Therefore, to determine the invertibility of $M$, we just need to do a 
gcd computation. 

\begin{Example}
Let $p=17$. From Example \ref{ex17}, we have $C_0 = \{1, 13,16 ,4\}$.
The first row of $M$ is 
\[ 1 \: 0 \: 1 \: 1 \: 0 \: 1 \: 1 \: 1 \: 1 \: 1 \: 1 \: 1 \: 1 \: 0 \: 1 \: 1 \: 0 ,\]
which corresponds to the polynomial
\[ m(x) = 1 + x^2 + x^3 +  x^5 + x^6 + x^7+ x^8 + x^9 + x^{10}+ x^{11}+x^{12}+x^{14}+x^{15}.\]
The inverse of $m(x)$ is
\[ m^{-1}(x) = 1 + x +   x^3 + x^4 + x^5 + x^6 + x^7+ x^{10}+ x^{11}+x^{12}+ x^{13} +x^{14}+x^{16}.\]
\end{Example}

By Dirichlet's Theorem, there are an infinite number of primes $p \equiv 1 \bmod 8$.
However, we do not have a theoretical criterion to 
determine if  a given matrix $M$ in this class of examples is invertible.
Therefore, we cannot prove that there are an infinite number of examples of this type.
However, by computing gcds, as described above, we determined all the invertible matrices $M$
of order less than $500$ that can be constructed by this method.
Some data about these matrices is presented in 
Table \ref{cyc.tab}. Another observation is that, if this is in fact an infinite class, 
then it can be shown that the density of these examples approaches $63/128 \approx .492$ as $f$ approaches infinity.

\subsection{Values and Bounds on $N_2(s)$ for Small $s$}
We summarize our upper and lower bounds on $N_2(s)$ for  $s \leq 12$ in Table \ref{tab2}.
For the cases $s = 5,6,7,8$, we have exact values of $N_2(s)$ that are obtained from exhaustive computer searches
For $s=9$, our lower bound is obtained from a partial (uncompleted) exhaustive search.
For $s = 10,11,12$, the lower bounds come from randomly constructed matrices.
All of these matrices are presented in Appendix A.

\begin{table}[tb]
 \caption{Values and Bounds on $N_2(s)$ for small $s$}
 \label{tab2}
 \[
 \begin{array}{c|c|c}
 s & N_2(s)  & \text{justification} \\ \hline
2 & N_2(2) = 1   & \text{Lemma \ref{2x2.lem}} \\
3 & N_2(3) = 7 & \text{Example \ref{exam3}} \\
4 &  N_2(4) = 30 & \text{Example \ref{exam4}} \\
5 & N_2(5) = 70 & \text{exhaustive search (Example \ref{example5})}\\
6 & N_2(6) = 150  & \text{exhaustive search (Example \ref{example6})}\\
7 & N_2(7) = 287  & \text{exhaustive search  (Example \ref{example7})}\\
8 & N_2(8) = 485 & \text{exhaustive search  (Example \ref{example8})}\\
9 & N_2(9) \geq 783  & \text{Example \ref{example9}}\\
10 & N_2(10) \geq 1194   & \text{Example \ref{example10}}\\
11 & N_2(11) \geq 1744   & \text{Example \ref{example11}}\\
12 & N_2(12) \geq 2448  & \text{Example \ref{example12}}
 \end{array}
 \]
\end{table}

\section{General Transforms}
\label{general.sec}

In this section, we examine
general (i.e., linear or nonlinear) AONT over an arbitrary alphabet,
extending some results from \cite{St} in a straightforward manner.

Let $A$ be an $N$ by $k$ array whose entries are elements chosen from 
an alphabet $X$ of order $v$.  
We will refer to $A$ as an {\em $(N,k,v)$-array}. 
Suppose the columns of $A$ are
labelled by the elements in the set $C = \{ 1, \dots , k\}$.  
Let $D \subseteq C$, and define $A_D$ to be the array obtained from $A$
by deleting all the columns $c \notin D$.
We say that $A$ is
{\em unbiased} with respect to $D$ if the rows of
$A_D$ contain every $|D|$-tuple of elements of $X$ 
exactly $N / v^{|D|}$ times.

The following result characterizes $(t,s,v)$-AONT in terms of
arrays that are unbiased with respect to
certain subsets of columns.

\begin{Theorem}
\label{equiv}
A $(t,s,v)$-AONT is equivalent to a $(v^s,2s,v)$-array
that is unbiased with respect to the following subsets of columns:
\begin{enumerate}
\item $\{1, \dots , s\}$,
\item $\{s+1, \dots , 2s\}$, and
\item $I \cup \{ s+1, \dots , 2s\} \setminus J$, 
         for all $I \subseteq \{1,\dots , s\}$ with $|I| = t$ and all 
         $J \subseteq \{s+1,\dots , 2s\}$ with $|J| = t$.
\end{enumerate}
\end{Theorem}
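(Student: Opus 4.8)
The plan is to exhibit an explicit correspondence between a $(t,s,v)$-AONT and a $(v^s,2s,v)$-array and then to translate each of the three entropy conditions defining the AONT into one of the three families of unbiasedness conditions (and conversely, each unbiasedness condition back into an entropy condition). Given a $(t,s,v)$-AONT $\phi$, I would build the array $A$ whose $v^s$ rows are exactly the tuples $(\mathbf{x},\mathbf{y})$ with $\mathbf{y}=\phi(\mathbf{x})$ as $\mathbf{x}$ ranges over $X^s$, where columns $1,\dots,s$ record the input and columns $s+1,\dots,2s$ record the output; conversely, from any $(v^s,2s,v)$-array with the three unbiasedness properties I would read off a map $\phi$ by interpreting the output columns as a function of the input columns. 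Throughout I work with the uniform distribution on the inputs, so that every probabilistic statement about $X_1,\dots,X_s,Y_1,\dots,Y_s$ becomes a statement about row-frequencies in $A$: for any column set $D$ and any tuple $\mathbf{w}$, the probability that the corresponding variables equal $\mathbf{w}$ is the number of rows of $A_D$ equal to $\mathbf{w}$ divided by $v^s$.

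First I would dispatch conditions 1 and 2. Condition 1, $\mathsf{H}(Y_1,\dots,Y_s\mid X_1,\dots,X_s)=0$, says that each input row determines its output, which is built into the construction of $A$; unbiasedness with respect to $\{1,\dots,s\}$ (each input $s$-tuple occurring $v^s/v^s=1$ time) says precisely that every input tuple occurs exactly once, i.e.\ that $\phi$ is defined on all of $X^s$ with uniform inputs. Condition 2, $\mathsf{H}(X_1,\dots,X_s\mid Y_1,\dots,Y_s)=0$, together with condition 1 says that $\phi$ is a bijection; this is equivalent to unbiasedness with respect to $\{s+1,\dots,2s\}$, since among $v^s$ rows each output $s$-tuple occurs once exactly when $\phi$ is onto, hence bijective. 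A consequence worth recording here is that, for a bijection with uniform inputs, every subset of the input columns and every subset of the output columns is itself unbiased, so all the marginals $\mathcal{X}$ and all the families of ``remaining outputs'' are uniformly distributed.

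The core of the argument is the translation of condition 3. Fixing $I\subseteq\{1,\dots,s\}$ and $J\subseteq\{s+1,\dots,2s\}$ with $|I|=|J|=t$, I set $\mathcal{X}=\{X_i:i\in I\}$ and $\mathcal{Y}=\{Y_{j-s}:j\in J\}$, so that $\{Y_1,\dots,Y_s\}\setminus\mathcal{Y}$ corresponds exactly to the output columns in $\{s+1,\dots,2s\}\setminus J$. The column set $D=I\cup(\{s+1,\dots,2s\}\setminus J)$ has $|D|=t+(s-t)=s$, so unbiasedness with respect to $D$ asserts that the joint value of $\mathcal{X}$ and the remaining outputs hits each of the $v^s$ possibilities exactly once, i.e.\ that this joint distribution is uniform. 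By the previous paragraph the marginal of $\mathcal{X}$ is uniform on $X^t$ and the marginal of the remaining outputs is uniform on $X^{s-t}$, and a distribution on $X^t\times X^{s-t}$ with uniform marginals is the uniform product distribution if and only if the two blocks are independent. Independence is in turn exactly the statement $\mathsf{H}(\mathcal{X}\mid \{Y_1,\dots,Y_s\}\setminus\mathcal{Y})=\mathsf{H}(\mathcal{X})$ of \eqref{t-AONT.eq}. Since the correspondence $(I,J)\leftrightarrow(\mathcal{X},\mathcal{Y})$ is a bijection between the relevant index sets, the quantifier ``for all $I,J$'' matches ``for all $\mathcal{X},\mathcal{Y}$'', and because every step above is an ``if and only if'', the two objects are genuinely equivalent.

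I expect the main obstacle to be bookkeeping rather than anything deep: specifically, reconciling the purely entropic definition of the AONT (which a priori permits non-uniform inputs) with the array formulation (which imposes the uniform distribution through unbiasedness with respect to $\{1,\dots,s\}$), and verifying cleanly that the step ``joint uniform $\Leftrightarrow$ independent with uniform marginals'' is valid and coincides exactly with condition 3. Care is also needed to confirm that unbiasedness with respect to a size-$s$ column set yields multiplicity $v^s/v^s=1$, so that the row-counts are genuinely $0$ or $1$; this is what makes the frequency interpretation coincide with the probability statement used in the entropy conditions.
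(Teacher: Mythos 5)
Your proposal is correct and takes essentially the same approach as the paper: the paper's proof consists of exactly the same graph-of-$\phi$ correspondence (rows of $A$ are the tuples $(\mathbf{x},\phi(\mathbf{x}))$), with all verification dismissed as ``easily seen,'' and your argument simply supplies those details. Your decision to fix the uniform input distribution---so that unbiasedness with respect to a size-$s$ column set becomes ``each $s$-tuple occurs exactly once,'' and condition 3 becomes ``uniform marginals plus independence, i.e.\ uniform joint distribution''---is the right (indeed the only workable) reading of the entropy conditions, and it handles a distributional subtlety that the paper's terse proof glosses over entirely.
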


\begin{proof}
Let $A$ be the hypothesized $(v^s,2s,v)$-array on alphabet $X$, $|X| = v$.
We construct $\phi : X^s \rightarrow X^s$ as follows:
for each row $(x_1, \dots , x_{2s})$ of $A$, define
\[ \phi (x_1, \dots , x_{s}) = (x_{s+1}, \dots , x_{2s}).\]
The function $\phi$ is easily seen to be a $(t,s,v)$-AONT.

Conversely, suppose $\phi$ is a $(t,s,v)$-AONT.
Define an array $A$ whose rows consist of all $v^s$ $2s$-tuples $(x_1, \dots , x_{2s})$, where
$\phi (x_1, \dots , x_{s}) = (x_{s+1}, \dots , x_{2s})$.
Then $A$ is the desired $(v^s,2s,v)$-array.
\end{proof}

An OA$(s,k,v)$ (an {\em orthogonal array})
is a $(v^s,k,v)$-array that is unbiased with respect to
any subset of $s$ columns.
The following corollary of Theorem \ref{equiv} is immediate.

\begin{Corollary}
If there exists an OA$(s,2s,v)$, then there exists a $(t,s,v)$-AONT
for all $t$ such that $1 \leq t \leq s$.
\end{Corollary}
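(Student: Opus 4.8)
If there exists an OA$(s,2s,v)$, then there exists a $(t,s,v)$-AONT for all $t$ with $1 \leq t \leq s$.

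The plan is to derive this directly from Theorem~\ref{equiv}, which already characterizes a $(t,s,v)$-AONT as a $(v^s,2s,v)$-array that is unbiased with respect to three families of column subsets. The key observation is that an OA$(s,2s,v)$ is, by definition, a $(v^s,2s,v)$-array that is unbiased with respect to \emph{every} set of $s$ columns. So the entire argument reduces to checking that each of the three column-subset requirements in Theorem~\ref{equiv} asks only for unbiasedness on a set of columns of size exactly $s$ (hence is automatically satisfied by an orthogonal array), and that this holds uniformly for every $t$ in the stated range.

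First I would recall the defining property: an OA$(s,2s,v)$ has $v^s$ rows, $2s$ columns, and for any choice of $s$ of its columns, each of the $v^s$ possible $s$-tuples over $X$ appears exactly $v^s/v^s = 1$ time. In particular it is unbiased with respect to any $s$-subset of columns. Then I would go through the three conditions of Theorem~\ref{equiv}. Condition~(1) requires unbiasedness on $\{1,\dots,s\}$, a set of size $s$; condition~(2) requires unbiasedness on $\{s+1,\dots,2s\}$, again of size $s$. Both are immediate. For condition~(3), the relevant column set is $I \cup (\{s+1,\dots,2s\}\setminus J)$ where $|I| = t$ and $|J| = t$; since $I \subseteq \{1,\dots,s\}$ and $\{s+1,\dots,2s\}\setminus J \subseteq \{s+1,\dots,2s\}$ are disjoint, this set has size $t + (s - t) = s$. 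Thus every subset appearing in the hypothesis of Theorem~\ref{equiv} has size exactly $s$, so the orthogonal array is unbiased with respect to all of them.

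Having verified the three unbiasedness conditions, the array satisfies the hypotheses of Theorem~\ref{equiv}, and so the equivalence established there yields a $(t,s,v)$-AONT. Crucially, the size count $t + (s-t) = s$ is independent of $t$, so the same orthogonal array simultaneously witnesses the hypotheses of Theorem~\ref{equiv} for \emph{every} value of $t$ with $1 \leq t \leq s$; this gives the ``for all $t$'' clause with no extra work. I do not anticipate a genuine obstacle here: the content is entirely front-loaded into Theorem~\ref{equiv}, and the corollary is a one-line consequence once one notes that the column subsets all have cardinality $s$. The only point requiring a moment's care is the disjointness observation that makes the cardinality of the condition~(3) subset equal to $s$ rather than something smaller, but this is immediate from $I$ and $\{s+1,\dots,2s\}\setminus J$ lying in disjoint index ranges.
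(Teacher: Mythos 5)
Your proof is correct and is exactly the argument the paper has in mind: the paper declares the corollary ``immediate'' from Theorem~\ref{equiv}, and your verification that all three column subsets in that theorem have cardinality exactly $s$ (in particular $|I \cup (\{s+1,\dots,2s\}\setminus J)| = t + (s-t) = s$) is precisely why an OA$(s,2s,v)$ satisfies its hypotheses for every $t$ simultaneously. Nothing is missing; you have simply written out the one-line argument the authors left implicit.
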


\section{Summary}
\label{summary.sec}

We have  initiated a study of all-or-nothing transforms where we consider
the security of $t$ inputs when $s-t$ outputs are fixed or known.
We focussed on the case $t=2$, for linear transformations defined over $\eff_2$.
Here one fundamental problem is to determine the maximum $2$-density, which we denoted by $R_2(s)$.
We ask if $\lim _{s \rightarrow \infty} R_2(s)$ exists, and if so, what its value is.
Our results establish that, if this limit $L$ exists, then
$.494  \leq L \leq .625$. 

It should be possible to adapt the techniques used in this
paper to deal with cases where $t > q$, or to analyze tranforms over
$\eff_q$ for fixed prime powers $q > 2$.

\section*{Acknowledgements}

D.\ R.\ Stinson would like to thank Jeroen van de Graaf for rekindling his
interest in AONT. Thanks also to Hugh Williams for providing some 
relevant number-theoretic information, and to Yuying Li and Henry Wolkowicz
for useful discussions about quadratic programming.

\appendix

\section{Matrices Yielding Good or Exact Lower Bounds for $N_2(M)$}

\begin{Example}
\label{example5}
	An invertible $5$ by $5$ matrix having $70$  invertible $2\times 2$ submatrices:
	\[
	M_{5\times 5}=\left(\begin{array}{c c c c c}
	0  &  0  &  1  &  1  &  1\\
	0  &  1  &  0  &  1  &  1\\
	1  &  0  &  0  &  1  &  1\\
	1  &  1  &  1  &  0  &  1\\
	1  &  1  &  1  &  1  &  0	
	\end{array}\right).
	\]
\end{Example}

\begin{Example}
\label{example6}
	An invertible $6$ by $6$ matrix having $150$ invertible $2\times 2$ submatrices:
	\[
	M_{6\times 6}=\left(\begin{array}{c c c c c c}
	0  &  0  &  1  &  1  &  1  &  1\\
	0  &  1  &  0  &  1  &  1  &  1\\
	1  &  0  &  1  &  0  &  1  &  1\\
	1  &  1  &  0  &  1  &  0  &  1\\
	1  &  1  &  1  &  0  &  0  &  1\\
	1  &  1  &  1  &  1  &  1  &  0
	\end{array}\right).
	\]
\end{Example}

\begin{Example}
\label{example7}

	An invertible $7$ by $7$ matrix having $287$ invertible $2\times 2$ submatrices:
	\[
	M_{7\times 7}=\left(\begin{array}{c c c c c c c}
	0  &  0  &  1  &  1  &  1  &  1  &  1\\
	0  &  1  &  0  &  1  &  1  &  1  &  1\\
	1  &  0  &  1  &  0  &  1  &  1  &  1\\
	1  &  1  &  0  &  1  &  0  &  1  &  1\\
	1  &  1  &  1  &  0  &  1  &  0  &  1\\
	1  &  1  &  1  &  1  &  0  &  1  &  0\\
	1  &  1  &  1  &  1  &  1  &  0  &  0
	\end{array}\right)
	\]
\end{Example}

\begin{Example}
\label{example8}
	An invertible $8$ by $8$ matrix having $485$ invertible $2\times 2$ submatrices:
	\[
	M_{8\times 8}=\left(\begin{array}{c c c c c c c c}
	0  &  0  &  0  &  1  &  1  &  1  &  1  &  1\\
	0  &  1  &  1  &  0  &  1  &  1  &  1  &  1\\
	0  &  1  &  1  &  1  &  0  &  1  &  1  &  1\\
	1  &  0  &  1  &  0  &  1  &  1  &  1  &  1\\
	1  &  1  &  0  &  1  &  0  &  1  &  1  &  1\\
	1  &  1  &  1  &  1  &  1  &  0  &  0  &  1\\
	1  &  1  &  1  &  1  &  1  &  0  &  1  &  0\\
	1  &  1  &  1  &  1  &  1  &  1  &  0  &  0
	\end{array}\right)
	\]
\end{Example}

\begin{Example}
\label{example9}
An invertible $9$ by $9$ matrix having $783$ invertible $2\times 2$ submatrices:
	\[
	M_{9\times 9}=\left(\begin{array}{c c c c c c c c c}
 0  &  0  &  0  &  1  &  1  &  1  &  1  &  1  &  1\\                                          
 0  &  1  &  1  &  0  &  0  &  1  &  1  &  1  &  1\\
 0  &  1  &  1  &  1  &  1  &  0  &  0  &  1  &  1\\
 1  &  0  &  1  &  0  &  1  &  0  &  1  &  1  &  1\\
 1  &  0  &  1  &  1  &  0  &  1  &  1  &  0  &  1\\
 1  &  1  &  0  &  1  &  0  &  1  &  0  &  1  &  1\\
 1  &  1  &  0  &  1  &  1  &  1  &  1  &  0  &  0\\
 1  &  1  &  1  &  0  &  1  &  1  &  0  &  0  &  1\\
 1  &  1  &  1  &  1  &  1  &  0  &  1  &  1  &  0	
	\end{array}\right)
	\]
\end{Example}

\begin{Example}
\label{example10}
	An invertible $10$ by $10$ matrix having $1194$ invertible $2\times 2$ submatrices:
	\[
	M_{10\times 10}=\left(\begin{array}{c c c c c c c c c c}
	0  &  0  &  0  &  1  &  1  &  1  &  1  &  1  &  1  &  1\\
	0  &  1  &  1  &  0  &  1  &  1  &  0  &  1  &  1  &  1\\
	0  &  1  &  1  &  1  &  0  &  1  &  1  &  0  &  1  &  1\\
	1  &  0  &  1  &  1  &  1  &  0  &  1  &  1  &  0  &  1\\
	1  &  1  &  0  &  1  &  1  &  1  &  0  &  1  &  1  &  1\\
	1  &  1  &  1  &  0  &  1  &  1  &  1  &  0  &  1  &  1\\
	1  &  0  &  1  &  1  &  0  &  1  &  1  &  1  &  0  &  1\\
	1  &  1  &  0  &  1  &  1  &  0  &  1  &  1  &  1  &  0\\
	1  &  1  &  1  &  0  &  1  &  1  &  0  &  1  &  1  &  0\\
	1  &  1  &  1  &  1  &  1  &  1  &  1  &  0  &  0  &  0
	\end{array}\right)
	\]
\end{Example}

\begin{Example}
\label{example11}
An invertible $11$ by $11$ matrix having $1744$ invertible $2\times 2$ submatrices:
	\[
	M_{11\times 11}=\left(\begin{array}{c c c c c c c c c c c}
	0  &  0  &  0  &  1  &  1  &  1  &  1  &  1  &  1  &  1  &  0\\
	0  &  1  &  1  &  0  &  1  &  0  &  1  &  1  &  1  &  0  &  1\\
	0  &  1  &  1  &  1  &  0  &  1  &  1  &  0  &  1  &  1  &  1\\
	1  &  0  &  1  &  1  &  1  &  0  &  1  &  1  &  0  &  1  &  1\\
	1  &  1  &  0  &  1  &  1  &  1  &  0  &  1  &  1  &  1  &  1\\
	1  &  1  &  1  &  0  &  1  &  1  &  1  &  0  &  1  &  0  &  1\\
	1  &  1  &  0  &  1  &  0  &  1  &  1  &  1  &  0  &  1  &  1\\
	1  &  0  &  1  &  1  &  1  &  0  &  1  &  1  &  1  &  0  &  1\\
	1  &  1  &  1  &  1  &  0  &  1  &  0  &  1  &  1  &  1  &  0\\
	1  &  0  &  1  &  0  &  1  &  1  &  1  &  0  &  1  &  1  &  0\\
	0  &  1  &  1  &  1  &  1  &  1  &  1  &  1  &  0  &  0  &  0	
	\end{array}\right)
	\]
\end{Example}

\begin{Example}
\label{example12}
An invertible $12$ by $12$ matrix having $2448$ invertible $2\times 2$ submatrices:
	\[
	M_{12\times 12}=\left(\begin{array}{c c c c c c c c c c c c}
	1  &  0  &  1  &  1  &  1  &  1  &  1  &  1  &  1  &  0  &  1  &  0\\
	1  &  1  &  1  &  1  &  1  &  0  &  1  &  1  &  1  &  1  &  1  &  0\\
	1  &  1  &  1  &  0  &  1  &  0  &  1  &  1  &  0  &  1  &  1  &  1\\
	0  &  0  &  1  &  0  &  1  &  1  &  1  &  0  &  1  &  1  &  1  &  1\\
	0  &  0  &  1  &  1  &  1  &  1  &  1  &  1  &  0  &  1  &  0  &  1\\
	1  &  1  &  0  &  1  &  1  &  0  &  0  &  1  &  1  &  1  &  1  &  1\\
	0  &  1  &  1  &  0  &  0  &  1  &  1  &  0  &  1  &  1  &  1  &  1\\
	0  &  1  &  0  &  1  &  1  &  1  &  0  &  1  &  1  &  1  &  0  &  1\\
	0  &  1  &  1  &  1  &  0  &  1  &  1  &  1  &  1  &  1  &  1  &  0\\
	1  &  1  &  1  &  1  &  1  &  1  &  0  &  1  &  0  &  0  &  1  &  1\\
	1  &  1  &  1  &  0  &  1  &  1  &  1  &  1  &  1  &  1  &  0  &  1\\
	1  &  1  &  0  &  1  &  1  &  1  &  1  &  0  &  1  &  0  &  0  &  1	
	\end{array}\right)
	\]
\end{Example}

\end{document}